\newtheorem{theorem}{Theorem}[section]
\newtheorem{lemma}{Lemma}[section]
\newtheorem{cor}{Corollary}[section]
\title[Bounds for approximate discrete tomography]
{Bounds for approximate \\ discrete tomography solutions}
\author{Lajos Hajdu}
\address{Institute of Mathematics\\
University of Debrecen\\
H-4010 Debrecen, P.O. Box 12\\
Hungary}
\email{hajdul@science.unideb.hu}
\author{Rob Tijdeman}
\address{Mathematical Institute\\
Leiden University\\
2300 RA Leiden, P.O. Box 9512\\
The Netherlands}
\email{tijdeman@math.leidenuniv.nl}
\subjclass[2010]{94A08, 15A06}
\keywords{Discrete tomography, approximate solutions}
\thanks{Research supported in part by the OTKA grants K75566, K100339 and NK101680, and by the T\'AMOP 4.2.1./B-09/1/KONV-2010-0007 project. The project is implemented through the New Hungary Development Plan, cofinanced by the European Social Fund and the European Regional Development Fund.}
\begin{document}
\begin{abstract}
In earlier papers we have developed an algebraic theory of discrete tomography. In those papers the structure of the functions $f: A \to \{0,1\}$ and
$f: A \to \mathbb{Z}$ having given line sums in certain directions have been analyzed. Here $A$ was a block in $\mathbb{Z}^n$ with sides parallel to the axes.
In the present paper we assume that there is noise in the measurements and (only)  that $A$ is an arbitrary or convex finite set in $\mathbb{Z}^n$.
We derive generalizations of earlier results. Furthermore we apply a method of Beck and Fiala to obtain results of he following type:
if the line sums in $k$ directions of a function $h: A \to [0,1]$ are known, then there exists a function $f: A \to \{0,1\}$ such that its line sums differ by at most $k$ from the corresponding line sums of $h$.
\end{abstract}

\maketitle

\section{Introduction}

Let $n$ be a positive integer and let $A$ be a finite subset of $\mathbb{Z}^n$. If $f: A \to \mathbb{R}$, then the line sum of $f$ along the line $l=\underline{c}+t\underline{d}$ (with $\underline{c},\underline{d}\in \mathbb{Z}^n$, $\underline{d}\neq \underline{0}$ fixed and $t\in \mathbb{R}$ variable) is defined as $\sum_{\underline{a} \in A \cap l} f(\underline{a})$. We call $\underline{d}$ a direction.
Let $S=\{\underline{d_1}, \dots, \underline{d_k}\}$ be a set of directions. By the line sums of $f$ along $S$ we mean all the line sums of $f$ along a line in a direction from $S$ passing through at least one point of $A$. Theorem 1 of \cite{ht4} states that if $A$ is a block with sides parallel to the axes, then any function $f: A \to \mathbb{R}$ with zero line sums along $S$ can be uniquely written as a linear combination
of so-called switching components of $S$ contained in $A$. In Section 3 we prove that for this result it suffices that $A$ is convex, but that the convexity requirement cannot be dropped.

By a discrete tomography problem we mean asking for a function $f:A \to Z$ which satisfies prescribed line sums along $S$, where $Z$ may be $\{0,1\}, \mathbb{R}, \mathbb{Z}$ or some finite real set.
The authors and others have developed an algebraic theory of the structure of the solutions of a discrete tomography problem, see \cite{ht1}, \cite{ht2}, \cite{ht3}, \cite{ht4}, \cite{h}, \cite{st}, \cite{sb}, \cite{dht}, \cite{bfht}.
It appears that the real solutions of a discrete
tomography problem form a linear manifold if there is at least one real solution, and that the integer solutions form a grid in this linear manifold, provided that at least one integer solution exists.

If the line sums are measured with some noise, then it is not certain that some function satisfies the measured line sums along $S$. A natural question is then what the best approximative solution is.
We shall show
that there is some linear manifold which can be considered as the set of `best real approximations' in the sense of least squares. An obvious choice is then to choose the shortest best approximation, that is the orthogonal projection of the origin to that linear manifold. In Section 4 we present an algorithm to construct this shortest best approximation and illustrate it by an example. In Section 5 we present an explicit system of linear equations which determines the shortest best solution in case $A$ is convex. As an application we generalize a result from \cite{dht} by giving an explicit expression for the shortest best solution in case $A$ is a rectangle with sides parallel to the axes and only row and column sums are given.

In the 80's Beck and Fiala \cite{bf} proved a `balancing' theorem. In Section 6 we show that this implies that if the line sums in $k$ directions of a function $h: A \to [0,1]$ are known, then there exists a function $f: A \to \{0,1\}$ such that its line sums differ by at most $k$ from the corresponding line sums of $h$.

We extend this result in Section 7 to the case that we are not searching for a binary image, but for an image $f$ with a finite number of given real values. To do so we generalize the result of Beck and Fiala.

\section{Notation}

We use the following notation throughout the paper. Let $n$ be a positive integer. For brevity, for $x_1, \dots, x_n \in \mathbb{R}$ and $u_1, \dots, u_n \in \mathbb{Z}$, we write $\underline{x} = (x_1, \dots,x_n)$, $\vec{x} = (x_1, \dots, x_n)^T$ and
$\underline{x}^{\underline{u}} = \prod_{j=1}^n x^{u_j}_j$.

Let $\underline{d} \in \mathbb{Z}^n$ with gcd$(d_1, \dots, d_n)=1$ be such that $\underline{d} \not= \underline{0}$, and for the smallest $j$ with $d_j \not = 0$ we have $d_j>0$. We call $\underline{d}$ a direction.
By lines with direction $\underline{d}$ we mean lines of the form $\underline{c}+t\underline{d}$ (with $\underline{c}\in \mathbb{Z}^n$ fixed, $t\in\mathbb{R}$ variable).

Let $A$ be a finite subset of $\mathbb{Z}^n$. Write $A = \{ \underline{a_1}, \dots, \underline{a_s}\}$ where $\underline{a_1}, \dots, \underline{a_s}$ are arranged in lexicographic increasing order.
We call $A$ convex if every $ \underline{a} \in \mathbb{Z}^n$ which belongs to the closed convex hull of $A$ belongs to $A$ itself. By the minimal corner of a set $B \subseteq \mathbb{Z}^n$ we mean the lexicographically smallest element $\phi(B)$ of $ B$.

If $f: A \to \mathbb{R}$, then the line sum of $f$ along the line
$l = \underline{c} + t \underline{d}$ is defined as $\sum_{\underline{a} \in A \cap l} f(\underline{a})$.
For any
$f:\ A\to{\mathbb R}$, write $\vec{f}:=
\left(f(\underline{a}_1),\dots,f(\underline{a}_s)\right)^T$.
We often identify $f$ and $\vec{f}$. The length of $\vec{f}$ (or $f$) is defined as $|f| = |\vec{f}| = \sqrt{\sum_{\underline{a} \in A} (f( \underline{a}))^2}$.

Let $k$ be a positive integer and $S=\{\underline{d_1}, \dots, \underline{d_k}\}$ be a fixed set of directions. By the line sums along $S$ we mean all the line sums along lines in a direction from $S$ which pass through at least one point of $A$. For $\underline{d}=(d_1,\dots,d_n)\in S$  put
$$
f_{\underline{d}}( \underline{x}) = (\underline{x}^{\underline{d}} -1) \prod_{d_j < 0} x_j^{-d_j},
$$
$F(\underline{x}) = \prod_{i=1}^k f_{\underline{d_i}} (\underline{x})$ and, for $ \underline{u} \in \mathbb{Z}^n$, set $F_{\underline{u}}(\underline{x}) = \underline{x}^{\underline{u}} F(\underline{x})$.
Obviously, the polynomial $F_{\underline{u}}$ has integer coefficients. We call the functions $F_{\underline{u}}$ the switching polynomials of $S$. Define the functions $m_{\underline{u}} : \mathbb{Z}^n \to \mathbb{Z}$ by
$$ m_{\underline{u}} ( \underline{v}) = {\rm coeff} (\underline{x}^{\underline{v}})~{\rm in} ~ F_{\underline{u}} (\underline{x}) ~ {\rm for} ~ \underline{v} \in \mathbb{Z}^n.$$
We define $D_{\underline{u}}$ as the set of $\underline{v} \in \mathbb{Z}^n$ for which $m_{\underline{u}} (\underline{v}) \not= 0$ and call it a switching component.
Let $\phi(\underline{u})$ denote the minimal corner of $D_{\underline{u}}$.
It follows from the above definitions that
\begin{equation} \label{phi}
 m_{\underline{u}}(\phi(\underline{u})) = \pm 1.
 \end{equation}

\section{The structure of functions with zero line sums}

We prove that Theorem 1 of \cite{ht4} remains true under the weaker condition that $A$ is convex.

\begin{theorem}
\label{base}
Let $A$ be a finite convex subset of  $\mathbb{Z}^n$, and $S$ a given set of directions. Then any function $f: A \to \mathbb{R}$ with zero line sums along $S$ can be uniquely written in the form
$$
f = \sum_{D_{\underline{u}} \subseteq A} c_{\underline{u}} m_{\underline{u}}
$$
with coefficients $c_{\underline{u}} \in \mathbb{R}$. Moreover, every such function $f$ has zero line sums along $S$.
\end{theorem}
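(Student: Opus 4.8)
The plan is to translate the whole statement into divisibility in the ring $R=\mathbb{R}[x_1^{\pm1},\dots,x_n^{\pm1}]$ of Laurent polynomials. To a finitely supported $g\colon\mathbb{Z}^n\to\mathbb{R}$ I attach its generating function $G_g(\underline{x})=\sum_{\underline{v}}g(\underline{v})\underline{x}^{\underline{v}}$, so that $G_{m_{\underline{u}}}=F_{\underline{u}}$. The engine is the elementary remark that $g$ has all line sums in a single direction $\underline{d}$ equal to zero if and only if $(\underline{x}^{\underline{d}}-1)\mid G_g$ in $R$: a unimodular change of variables sends $\underline{d}$ to $(1,0,\dots,0)$ (possible since $\gcd(d_1,\dots,d_n)=1$), turning the lines in direction $\underline{d}$ into the fibres of the first coordinate, and divisibility by $x_1-1$ says precisely that every fibre sum vanishes. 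Each such $\underline{x}^{\underline{d}}-1$ is irreducible (it becomes $x_1-1$), and distinct directions give non-associate irreducibles, because two binomials are associate in $R$ only through a monomial unit, which forces the directions to coincide. Hence the $\underline{x}^{\underline{d_i}}-1$ are pairwise coprime in the factorial ring $R$, and since $f_{\underline{d_i}}$ equals $\underline{x}^{\underline{d_i}}-1$ times a unit, we obtain the key equivalence: $f$ has zero line sums along $S$ if and only if $F\mid G_f$.

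Two parts of the theorem then follow at once. Every $F_{\underline{u}}=\underline{x}^{\underline{u}}F$ is divisible by each $\underline{x}^{\underline{d_i}}-1$, so every $m_{\underline{u}}$, and hence every linear combination of them, has zero line sums along $S$. For uniqueness, a relation $\sum_{\underline{u}}c_{\underline{u}}m_{\underline{u}}=0$ becomes $F\cdot\sum_{\underline{u}}c_{\underline{u}}\underline{x}^{\underline{u}}=0$; as $R$ is a domain and the $\underline{x}^{\underline{u}}$ are independent, all $c_{\underline{u}}$ vanish.

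The real content is existence \emph{together with} the restriction $D_{\underline{u}}\subseteq A$. From $F\mid G_f$ we may write $G_f=F\cdot H$ with $H=\sum_{\underline{u}}c_{\underline{u}}\underline{x}^{\underline{u}}\in R$, which gives $f=\sum_{\underline{u}}c_{\underline{u}}m_{\underline{u}}$ immediately; the entire difficulty is to show that $c_{\underline{u}}\neq0$ forces $D_{\underline{u}}\subseteq A$, and this is exactly where convexity is indispensable (and, as the next section shows, where the statement fails without it). I would argue by induction on $k=|S|$. The case $k=0$ is trivial: $F=1$, $m_{\underline{a}}$ is the indicator of $\{\underline{a}\}$, and $f=\sum_{\underline{a}\in A}f(\underline{a})m_{\underline{a}}$. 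For the step, write $F=f_{\underline{d_k}}F'$ where $F'$ is the switching product of $S'=\{\underline{d_1},\dots,\underline{d_{k-1}}\}$. Since $f$ has zero line sums along $\underline{d_k}$ we may divide, obtaining $g$ with $G_g=G_f/f_{\underline{d_k}}\in R$; coprimality gives $F'\mid G_g$ (equivalently $H=G_g/F'$), so $g$ has zero line sums along $S'$.

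The geometric heart is the control of supports. Up to the harmless translation induced by the clearing factor in $f_{\underline{d_k}}$, dividing by $f_{\underline{d_k}}$ replaces, on each line in direction $\underline{d_k}$, the values of $f$ (which sum to zero) by their partial sums; the outcome is supported on that line between the first and the last place where $f$ is nonzero. Convexity of $A$ then keeps this whole segment, and also its $\underline{d_k}$-translate, inside $A$; running over all lines shows that the support of the partial-sum function and of its $\underline{d_k}$-translate both lie in $A$. Taking $A'$ to be the lattice points of the convex hull of that support — a convex set which, after the translation, sits inside $A$ — the induction hypothesis expands $g$ into $S'$-components $D'_{\underline{u}}\subseteq A'$ with the same coefficients $c_{\underline{u}}$. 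Multiplying back by $f_{\underline{d_k}}$ recovers $f=\sum_{\underline{u}}c_{\underline{u}}m_{\underline{u}}$, with each $D_{\underline{u}}$ contained in the union of $D'_{\underline{u}}$ and its $\underline{d_k}$-translate; a final use of convexity, applied to the union of the support and its $\underline{d_k}$-translate, places every such $D_{\underline{u}}$ in $A$. I expect the genuine obstacle to be precisely this support bookkeeping under division by $f_{\underline{d_k}}$ — ensuring that neither the partial-sum support nor its $\underline{d_k}$-shift escapes $A$ — since it is the only point where convexity enters and is exactly what breaks down for non-convex $A$.
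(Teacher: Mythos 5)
Your proposal is correct, but it takes a genuinely different route from the paper. The paper's proof is a reduction: embed $A$ in a bounding hyperblock $A^*$, extend $f$ by zero, quote Theorem 1 of \cite{ht4} to write $f=\sum_{D_{\underline{u}}\subseteq A^*}c_{\underline{u}}m_{\underline{u}}$, and then kill every coefficient with $D_{\underline{u}}\not\subseteq A$ by a separating-hyperplane and ordering argument: a point of $D_{\underline{u}}$ outside $A$ lies strictly beyond some hyperplane $L$ extending a hyperface of the convex hull of $A$; among all components meeting the open halfspace beyond $L$, one orders them by non-increasing distance of their extremal point to $L$ (ties broken lexicographically), observes that the first component contains a point lying in no other such component, where $f$ vanishes (it is outside $A$), so its coefficient is zero, and peels off components one by one; uniqueness and the converse statement are simply cited from \cite{ht4}. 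You instead rebuild the whole theorem on the equivalence that $f$ has zero line sums along $S$ if and only if $F\mid G_f$ in the Laurent polynomial ring: this yields uniqueness and the ``moreover'' part immediately from the domain/UFD structure, and you get the crucial support condition $D_{\underline{u}}\subseteq A$ by induction on $k$, dividing out one direction at a time and controlling the support of the quotient by partial sums along lines together with convexity (lattice points on a segment between two points of $A$ lie in $A$, so the partial-sum support and its $\underline{d_k}$-translate, hence their lattice hull, stay inside $A$). Both arguments are sound; yours buys self-containedness (it reproves rather than cites the hyperblock case, which is itself convex) and isolates exactly where convexity acts, while the paper's buys brevity by reusing \cite{ht4}. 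Two details you should make explicit in a full write-up: first, non-associateness of $\underline{x}^{\underline{d}}-1$ and $\underline{x}^{\underline{d'}}-1$ has the exceptional case $\underline{d'}=-\underline{d}$, which is excluded only by the sign normalization of directions in Section 2; second, the monomial factor $\prod_{d_j<0}x_j^{-d_j}$ in $f_{\underline{d_k}}$ shifts all supports by a fixed lattice vector $\underline{w}$, so the set fed to the induction hypothesis is the lattice hull of the support of the quotient, and the final containment reads $D_{\underline{u}}\subseteq(D'_{\underline{u}}+\underline{w})\cup(D'_{\underline{u}}+\underline{w}+\underline{d_k})\subseteq A$ --- harmless, as you say, but the bookkeeping deserves to be displayed rather than waved at.
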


\noindent If there is no $\underline{u}$ for which $D_{\underline{u}} \subseteq A$, then the only function $f$ with zero line sums along $S$ is the trivial function $f=0$.
Otherwise, the functions with zero line sums along $S$ form a proper linear subspace of the linear space of all functions $f: A \to \mathbb{R}$.

\begin{proof}
The statement has been proved in case $A$ is a hyperblock with sides parallel to the axes in Theorem 1 of \cite{ht4}.
Let $A^*$ be a hyperblock with sides parallel to the axes such that $A \subseteq A^*$. Set $f(\underline{x}) = 0$ for $\underline{x} \in A^* \setminus A$. Then we know that
\begin{equation}
\label{U}
f =  \sum_{D_{\underline{u}} \subseteq A^*} c_{\underline{u}} m_{\underline{u}}
\end{equation}
with coefficients $c_{\underline{u}} \in \mathbb{R}$.
It remains to prove that $c_{\underline{u}} = 0$ if $ D_{\underline{u}}$ is not contained in $A$.

If $ D_{\underline{u}}$ is not contained in $A$, then there exists $\psi(\underline{u})\in D_{\underline{u}}$ such that $\psi(\underline{u})\notin A$. Since $A$ is convex, there is a linear manifold $L$ which
extends a hyperface of the convex hull of $A$
such that $\psi(u)$ and $A$ are on different sides of $L$. Let $H_L$ be the open halfspace generated by $L$ which contains $\psi(u)$. Note that $H_L$ does not contain any element of $A$. Consider the set $U_L$ of all $\underline{u}$ such that $D_{\underline{u}} \subseteq A^*$ and $D_{\underline{u}}$ contains an element $\psi(\underline{u})\in H_L$. Without loss of generality we assume that $\psi(\underline{u})$ has maximal Euclidean distance $d(\psi(\underline{u}),L)$ to $L$
among the elements of $D_{\underline{u}}\cap H_L$ and, if there are more such elements with maximal distance to $L$, then $\psi(u)$ is the lexicographically smallest among them.
Since the sets $D_{\underline{u}}$ for variable $\underline{u}$ are translates of each other, the vectors $\psi(\underline{u}) -\underline{u}$ are the same for all $\underline{u} \in U_L$. Now we arrange the elements of $U_L$ according to the non-increasing distances $d(\psi(\underline{u}),L)$ of $\psi(\underline{u})$ to $L$. Thereafter we order the elements of $U_L$ for which the distances $d(\psi(\underline{u}),L)$ are equal according to non-decreasing lexicographic order of $\underline{u}$. Consider the first element $\underline{u} \in U_L$ according to this ordering. By the above construction there is no other set $D_{\underline{u}}$ for $\underline{u} \in U_L$ which contains $\psi(\underline{u})$.
Since $\psi(u) \notin A$ we infer $f(\psi(\underline{u})) = 0$, hence $c_{\underline{u}} =0$. We proceed with the next element $\underline{u} \in U_L$ in the ordering and conclude by a similar reasoning that $c_{\underline{u}} = 0$ also for this $\underline{u}$. Continuing until we have had all elements of $U_L$, we conclude that $c_{\underline{u}}= 0$ for all $\underline{u} \in U_L$. Since $D_{\underline{u}}$ was an arbitrary set not contained in $A$, the first statement follows. The uniqueness and the second statement of the theorem follow immediately from Theorem 1 of \cite{ht4}.
\end{proof}

The following result is a consequence of Theorem \ref{base}.

\begin{cor}
In the notation of Theorem \ref{base}, for any $h: A\to \mathbb{R}$ and for any prescribed values from $\mathbb{R}$ at the minimal corners of the switching components contained in $A$ there exists a unique $f: A\to\mathbb{R}$ having the same line sums along $S$ as $h$ has and having the prescribed values at the minimal corners. Moreover, if $h: A \to \mathbb{Z}$, then $f: A \to \mathbb{Z}$.
\end{cor}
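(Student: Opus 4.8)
The plan is to translate the two requirements on $f$ into a single inhomogeneous linear system in the coefficients $c_{\underline{u}}$ supplied by Theorem \ref{base}, and then to show that this system is triangular with unit diagonal, so that it has a unique solution which moreover stays integral when the data are integral. First I would observe that a function $g: A \to \mathbb{R}$ has the same line sums along $S$ as $h$ if and only if $g - h$ has zero line sums along $S$; hence, by Theorem \ref{base}, the functions with the prescribed line sums are exactly those of the form
$$
f = h + \sum_{D_{\underline{u}} \subseteq A} c_{\underline{u}} m_{\underline{u}}, \qquad c_{\underline{u}} \in \mathbb{R}.
$$
Denoting by $p_{\underline{w}}$ the prescribed value at the minimal corner $\phi(\underline{w})$, the additional requirement $f(\phi(\underline{w})) = p_{\underline{w}}$ for every $\underline{w}$ with $D_{\underline{w}} \subseteq A$ becomes the linear system
$$
\sum_{D_{\underline{u}} \subseteq A} m_{\underline{u}}(\phi(\underline{w}))\, c_{\underline{u}} = p_{\underline{w}} - h(\phi(\underline{w}))
$$
in the unknowns $c_{\underline{u}}$, one equation for each such $\underline{w}$.

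The heart of the argument, and the step I expect to be the main obstacle, is to show that the coefficient matrix $M = \bigl( m_{\underline{u}}(\phi(\underline{w})) \bigr)$ is triangular with diagonal entries $\pm 1$. Here I would use two facts. Since $F_{\underline{u}}(\underline{x}) = \underline{x}^{\underline{u}} F(\underline{x})$, the switching components $D_{\underline{u}}$ are translates of one another, so the minimal corners $\phi(\underline{u})$ are pairwise distinct and may be used to index the components; I would order them so that $\phi(\underline{u})$ increases lexicographically. Because $\phi(\underline{u})$ is by definition the lexicographically smallest element of $D_{\underline{u}}$, we have $m_{\underline{u}}(\underline{v}) = 0$ whenever $\underline{v}$ precedes $\phi(\underline{u})$ lexicographically; in particular $m_{\underline{u}}(\phi(\underline{w})) = 0$ as soon as $\phi(\underline{w})$ precedes $\phi(\underline{u})$. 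Thus $M$ is triangular for this ordering, and its diagonal entries are $m_{\underline{u}}(\phi(\underline{u})) = \pm 1$ by \eqref{phi}. Consequently $\det M = \pm 1$, so the system has a unique solution $(c_{\underline{u}})$, which yields the desired existence and uniqueness of $f$ over $\mathbb{R}$.

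Finally, for the integrality claim I would note that $M$ is an integer matrix (the values $m_{\underline{u}}(\underline{v})$ are coefficients of integer polynomials) which is triangular with diagonal $\pm 1$, hence unimodular; its inverse is again an integer matrix. If $h: A \to \mathbb{Z}$ and the prescribed values $p_{\underline{w}}$ are integers, then the right-hand side $p_{\underline{w}} - h(\phi(\underline{w}))$ is integral, so solving by forward substitution (equivalently, applying $M^{-1}$) gives integer coefficients $c_{\underline{u}}$. Since each $m_{\underline{u}}$ is integer-valued and $h$ is integer-valued, the resulting $f = h + \sum_{D_{\underline{u}} \subseteq A} c_{\underline{u}} m_{\underline{u}}$ maps $A$ into $\mathbb{Z}$, which completes the proof.
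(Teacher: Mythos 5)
Your proposal is correct and takes essentially the same approach as the paper: write $f = h + \sum_{D_{\underline{u}} \subseteq A} c_{\underline{u}} m_{\underline{u}}$ via Theorem \ref{base}, then exploit \eqref{phi} and the lexicographic ordering of the minimal corners to determine the coefficients successively, which also gives integrality. Your triangular, unimodular coefficient matrix is just an explicit matrix rendering of the ordering argument the paper invokes.
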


\begin{proof}
According to Theorem \ref{base} there are unique coefficients $c_{\underline{u}} $ such that
$$
f = h + \sum_{D_{\underline{u}} \subseteq A} c_{\underline{u}} m_{\underline{u}}
$$
has the same line sums along $S$ as $h$. By (\ref{phi}) we obtain, following the ordering argument from the previous proof, that each coefficient $c_{\underline{u}} $ is completely determined by the value of $m_{\underline{u}}$ at $\phi({u})$ and, moreover, that $c_{\underline{u}} \in \mathbb{Z}$ if $h: A \to \mathbb{Z}$.
\end{proof}

\noindent {\bf Remark 3.1.} The following example shows that in Theorem 3.1 we cannot drop the convexity requirement. \\
Let $A = \{ (0,0), (0,1), (1,0), (1,2), (2,1), (2,2) \} $ and $S = \{ (1,0), (0,1) \}$.
Then for every $ \underline {u}$ we have $D_{\underline{u}} - \underline{u} = \{ (0,0), (0,1), (1,0), (1,1) \}.$ Therefore $A$ does not contain any switchting component.
However, there is a nontrivial function $f: A \to \mathbb{Z}$ with all line sums along $S$ equal to 0: \\
$f(0,0)=1, f(0,1) =-1, f(1,0) = -1, f(1,2) = 1, f(2,1) = 1, \\f(2,2) = -1.$

\section{The best approximating function for general domains}

The next theorem can be used to construct the function $f_0: A \to \mathbb{R}$ such that $f_0$ fits optimally the measured line sums along $S$ in the sense of least squares and, moreover, has minimal Euclidean length among such functions.

Let $A\subseteq{\mathbb Z}^n$ be a finite, nonempty set, and write $\underline{a}_1,\dots,\underline{a}_s$ for its elements. For the rest of this section, fix the indexing of the elements.

Let $B$ be a $t$ by $s$ matrix of real numbers.
The range of the matrix $B$ is denoted by
$$
R(B):=\{B\cdot\vec{x}\ :\ \vec{x}\in{\mathbb R}^s\}.
$$
Hence $R(B)$ is a subspace of ${\mathbb R}^t$, generated by the column vectors $\vec{b}_1,\dots,\vec{b}_s$ of $B$. We have $0\leq {\text{dim}}(R(B))\leq t$.
Write $B_1$ for a matrix formed by a maximal linearly independent set of column vectors of $B$. Then $B_1 = B \cdot C_1$ where $C_1$ is a matrix of type $s\times ({\rm{rank}}(B))$ which has rank$(B)$ entries 1 in distinct columns and all other entries equal to 0. Observe that $B_1^T \cdot B_1$ is invertible.

\begin{lemma}
\label{lemlstsq}
Let $A, B$ and $B_1$ be as above. Let $\vec{b}\in{\mathbb R}^t$ be arbitrary. Put
\begin{equation}
\label{b^*}
\vec{b^*} = B_1 \cdot (B_1^T \cdot B_1)^{-1} \cdot B_1^T \cdot \vec{b}.
\end{equation}
Then $\vec{b^*}$ is the vector from $R(B)$ which is closest to $\vec{b}$.
\end{lemma}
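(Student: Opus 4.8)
The plan is to show that $\vec{b^*}$ is the orthogonal projection of $\vec{b}$ onto the subspace $R(B)$, which by the standard characterization of the nearest point in a subspace is exactly the unique closest point. First I would observe that since $B_1$ consists of a maximal linearly independent set of columns of $B$, its columns form a basis of $R(B)$; hence $R(B_1) = R(B)$, and in particular $\vec{b^*} \in R(B)$ because it is expressed as $B_1$ times a vector. This reduces the problem to working with the full-rank matrix $B_1$, whose columns are linearly independent, so that $B_1^T \cdot B_1$ is indeed invertible (as already observed in the setup), and the formula in (\ref{b^*}) is well defined.

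The core step is to verify the orthogonality condition: the residual $\vec{b} - \vec{b^*}$ is orthogonal to every vector in $R(B)$. Since $R(B) = R(B_1)$, it suffices to check that $\vec{b} - \vec{b^*}$ is orthogonal to each column of $B_1$, i.e. that $B_1^T \cdot (\vec{b} - \vec{b^*}) = \vec{0}$. I would compute
$$
B_1^T \cdot (\vec{b} - \vec{b^*}) = B_1^T \cdot \vec{b} - B_1^T \cdot B_1 \cdot (B_1^T \cdot B_1)^{-1} \cdot B_1^T \cdot \vec{b} = B_1^T \cdot \vec{b} - B_1^T \cdot \vec{b} = \vec{0},
$$
where the middle term collapses because $B_1^T \cdot B_1$ cancels with its inverse. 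This is the essential algebraic identity and follows immediately from the invertibility established above.

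Finally I would convert the orthogonality into the minimization claim. For an arbitrary $\vec{y} \in R(B)$, write $\vec{b} - \vec{y} = (\vec{b} - \vec{b^*}) + (\vec{b^*} - \vec{y})$, and note that $\vec{b^*} - \vec{y} \in R(B)$ while $\vec{b} - \vec{b^*}$ is orthogonal to $R(B)$. By the Pythagorean identity,
$$
|\vec{b} - \vec{y}|^2 = |\vec{b} - \vec{b^*}|^2 + |\vec{b^*} - \vec{y}|^2 \geq |\vec{b} - \vec{b^*}|^2,
$$
with equality if and only if $\vec{y} = \vec{b^*}$. Hence $\vec{b^*}$ is the unique element of $R(B)$ closest to $\vec{b}$, as required.

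I do not expect a genuine obstacle here, since this is the classical least-squares projection argument; the only point demanding a little care is the reduction from $B$ to $B_1$, namely justifying that passing to a maximal linearly independent subset of columns does not shrink the range and that the resulting Gram matrix $B_1^T \cdot B_1$ is invertible precisely because those columns are linearly independent. Once that reduction is in place, the orthogonality computation and the Pythagorean step are routine.
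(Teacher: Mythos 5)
Your proof is correct and follows essentially the same route as the paper: both verify that $\vec{b^*}\in R(B)$ (via the columns of $B_1$ spanning $R(B)$) and that $B_1^T\cdot(\vec{b}-\vec{b^*})=\vec{0}$ by the identical cancellation computation. The only difference is that you explicitly prove, via the Pythagorean identity, that orthogonality of the residual characterizes the closest point, a fact the paper simply declares obvious; this is a harmless (indeed slightly more self-contained) addition, not a different approach.
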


\begin{proof}
Obviously, the vector $\vec{b}^*$ in $R(B)$ closest to $\vec{b}$ is uniquely determined by the following properties:
\begin{itemize}
\item $\vec{b}^*\in R(B)$,
\item $\vec{b}-\vec{b}^*$ is orthogonal to $R(B)$.
\end{itemize}
Since $B_1 = B \cdot C_1$ the first property follows immediately from \eqref{b^*}.
The second property is equivalent to that $\vec{b}-\vec{b}^*$ is orthogonal to all the column vectors of $B$, or equivalently, to all column vectors of $B_1$. In other words, it is equivalent to
$$
B_1^T\cdot(\vec{b}-\vec{b}^*)=\vec{0}.
$$
It follows from \eqref{b^*} that
$$
B_1^T\cdot \vec{b}^* = B_1^T\cdot B_1 \cdot (B_1^T \cdot B_1)^{-1} \cdot B_1^T \cdot \vec{b} = B_1^T \cdot \vec{b}.
$$
Hence both properties are satisfied.
\end{proof}

We use the above notation and define
$$
\vec{l}_f = B\cdot \vec{f}.
$$
Let  $B_2$ be a matrix formed by a maximal linearly independent set of row vectors of $B$. Then $B_2 = C_2 \cdot B$ where $C_2$ is a matrix of type $({\rm{rank}}(B))\times t$ which has rank$(B)$ entries 1 in distinct rows and all other entries equal to 0. Observe that $B_2 \cdot B_2^T$ is invertible.

\begin{theorem}
\label{thmlstsq}
Let $A, B, B_2, C_2, \vec{b}, \vec{b^*}, f$ ($ = \vec{f}$) be as above. Put
\begin{equation} \label{f_0}
\vec{f_0} = B_2^T \cdot (B_2 \cdot B_2^T)^{-1} \cdot C_2 \cdot \vec{b^*}
\end{equation}
Then the corresponding $f_0: A \to \mathbb{R}$ has the following properties:
\begin{itemize}
\item[(i)] for any $f:\ A\to{\mathbb R}$ we have $|\vec{l}_f-\vec{b}|\geq |\vec{l}_{f_0}-\vec{b}|$,
\item[(ii)] if $f:\ A\to{\mathbb R}$, $f\neq f_0$ such that $|\vec{l}_f-\vec{b}|=|\vec{l}_{f_0}-\vec{b}|$, then $|\vec{f}|>|\vec{f}_0|$.
\end{itemize}
\end{theorem}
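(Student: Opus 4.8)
The plan is to read the formula \eqref{f_0} geometrically. By Lemma \ref{lemlstsq} the set of attainable line-sum vectors is exactly $R(B)$, and $\vec{b^*}$ is its point closest to the measurement $\vec{b}$; moreover, being an orthogonal projection, this closest point is unique. Hence the residual $|\vec{l}_f-\vec{b}|=|B\vec{f}-\vec{b}|$ is minimized precisely by those $f$ with $B\vec{f}=\vec{b^*}$, and among these the shortest is the one lying in the row space of $B$, i.e.\ in $(\ker B)^\perp$. I would therefore split the proof into two claims: (a) $B\vec{f}_0=\vec{b^*}$, which yields (i); and (b) $\vec{f}_0\in(\ker B)^\perp$, which together with the uniqueness of $\vec{b^*}$ yields (ii).

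For (a), first I would check that $B_2\vec{f}_0=C_2\vec{b^*}$: substituting \eqref{f_0} and using that $B_2\cdot B_2^T$ is invertible, the factor $B_2\cdot B_2^T\cdot(B_2\cdot B_2^T)^{-1}$ collapses to the identity. To pass from the selected rows $B_2=C_2\cdot B$ to all of $B$, I would invoke $\vec{b^*}\in R(B)$ (Lemma \ref{lemlstsq}), writing $\vec{b^*}=B\cdot\vec{y}$. Then $C_2\vec{b^*}=C_2\cdot B\cdot\vec{y}=B_2\cdot\vec{y}$, so $B_2\cdot(\vec{f}_0-\vec{y})=\vec{0}$. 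Since $B_2$ consists of a maximal linearly independent set of rows of $B$, every row of $B$ is a linear combination of the rows of $B_2$, whence $\ker B_2=\ker B$; therefore $\vec{f}_0-\vec{y}\in\ker B$ and $B\vec{f}_0=B\cdot\vec{y}=\vec{b^*}$. Consequently $|\vec{l}_{f_0}-\vec{b}|=|\vec{b^*}-\vec{b}|$, which by the Lemma is the minimum of $|B\vec{f}-\vec{b}|$ over all $f$, proving (i). I expect this extension from $B_2$ to $B$ to be the main obstacle, and it is exactly here that the hypothesis $\vec{b^*}\in R(B)$ is indispensable: without it one only controls the coordinates picked out by $C_2$.

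For (b), and hence (ii), I would observe that $\vec{f}_0=B_2^T\bigl((B_2\cdot B_2^T)^{-1}\cdot C_2\cdot\vec{b^*}\bigr)$ lies in the column space of $B_2^T$, i.e.\ in the row space of $B_2$, which equals the row space of $B$; thus $\vec{f}_0\in(\ker B)^\perp$. Now suppose $f\neq f_0$ satisfies $|\vec{l}_f-\vec{b}|=|\vec{l}_{f_0}-\vec{b}|=|\vec{b^*}-\vec{b}|$. Since $\vec{l}_f\in R(B)$ and $\vec{b^*}$ is the unique closest point of $R(B)$ to $\vec{b}$, it follows that $\vec{l}_f=\vec{b^*}=\vec{l}_{f_0}$, so $\vec{f}-\vec{f}_0\in\ker B$. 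As $\vec{f}_0\perp\ker B$, the theorem of Pythagoras gives $|\vec{f}|^2=|\vec{f}_0|^2+|\vec{f}-\vec{f}_0|^2>|\vec{f}_0|^2$, because $\vec{f}\neq\vec{f}_0$. This establishes (ii) and completes the plan.
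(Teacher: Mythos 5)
Your proof is correct and takes essentially the same route as the paper: it reduces the theorem to the two properties $B\cdot\vec{f}_0=\vec{b^*}$ and $\vec{f}_0\perp\ker(B)$ and verifies both directly from the formula \eqref{f_0}, using Lemma \ref{lemlstsq} for the optimality of $\vec{b^*}$. If anything, you supply details the paper leaves implicit, namely the passage from $B_2\cdot\vec{f}_0=C_2\cdot\vec{b^*}$ to $B\cdot\vec{f}_0=\vec{b^*}$ (via $\vec{b^*}\in R(B)$ and $\ker B_2=\ker B$) and the Pythagorean argument deducing (i) and (ii) from the two properties.
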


\begin{proof}
Observe that (i) and (ii) are equivalent with the following two properties:
\begin{itemize}
\item $B\cdot \vec{f}_0=\vec{b}^*$,
\item $\vec{f_0}$ is orthogonal to ker$(B)$, the nullspace of $B$.
\end{itemize}
The first property is clearly equivalent to
\begin{equation}
\label{eqforf01}
B_2 \cdot \vec{f}_0= C_2 \cdot \vec{b^*}.
\end{equation}
It follows immediately from \eqref{f_0} that this property is satisfied.
Since
$$
{\rm{ker}}(B)=\{\vec{x}\in{\mathbb R}^s\ :\ B\cdot\vec{x}=\vec{0}\},
$$
the orthogonal complement of ker$(B)$ is the subspace of ${\mathbb R}^s$ generated by the row vectors of $B$. Hence, by the definition of $B_2$, the second property above is equivalent to
\begin{equation}
\label{eqforf02}
\vec{f_0}=B_2^T\cdot\vec{y}\ \ \ {\rm{for\ some}}\ \vec{y}\in{\mathbb R}^r,
\end{equation}
where $r$ is the rank of $B_2$.
This is obviously true because of \eqref{f_0}.
Thus both properties are satisfied.
\end{proof}

\noindent {\bf Remark 4.1.} An alternative version of Theorem \ref{thmlstsq} can be obtained by using the Moore-Penrose pseudo inverse, cf. the proof of Theorem 1 in \cite{bfht}.
\vskip.3cm

\noindent {\bf Remark 4.2.} We apply Lemma \ref{lemlstsq} and Theorem \ref{thmlstsq} in the context of Discrete Tomography as follows. Let $A$ be a finite subset of $\mathbb{Z}^n$ and $S$ a set of directions. Let $l_1, \dots, l_t$ be the measured line sums along $S$. Note that because of noise they need not be consistent. Then $B$ is the $s$ by $t$ matrix whose entry $B_{ij}$ equals $1$ if the line corresponding to $l_j$ passes through $\underline{a}_i$ and $0$ otherwise. The vector $\vec{b}^*$ constructed in Lemma \ref{lemlstsq} represents the corresponding line sums along $S$ which are consistent and provide the optimal choice in the sense that $\sum_{j=1}^t (l_j - b_j^*)^2$ is minimal among the consistent line sums $b^*_j$ along $S$. Furthermore, the vector $\vec{f_0}$ constructed in Theorem \ref{thmlstsq} is the shortest best approximation in the sense that it is the shortest vector realizing the line sums given by $\vec{b^*}$. The corresponding function $f_0: A \to \mathbb{R}$ may be considered as the optimal choice for the measured line sums $l_1, \dots, l_t$.
\vskip.1cm

We illustrate the method by an example.
\vskip.3cm

\noindent
{\bf Example.} We use the notation of Lemma \ref{lemlstsq} and Theorem \ref{thmlstsq}. Consider the following subset of ${\mathbb R}^2$:
$$
A:=\{(1,0), (3,0), (0,1), (4,1), (0,2), (4,2), (1,3), (2,3), (3,3)\}.
$$
As the set of directions, take
$$
S:=\{(1,0), (0,1), (1,-1), (1,1)\}.
$$
The ordering of the points in $A$ and directions in $S$ are arbitrary, but fixed.
As a (measured) line sum vector, take
$$
\vec{b}^T:=\left(1, \frac{23}{10}, \frac75, 1, 1, 1, \frac32, 1, \frac65, 1, 1, 1, \frac9{10}, \frac{13}{10}, \frac12, 1, \frac65, \frac35, \frac12, \frac{17}{10}, \frac7{10}\right).
$$
The entries of $\vec{b}$ belong to the lines
$$  y=t\ \ (t=0,1,2,3),\ \ \ x=t\ \ (t=0,1,2,3,4) $$
$$ y=x+t\ \ (t=-3,-2,-1,0,1,2),\ \ \ y=-x+t\ \ (t=1,2,3,4,5,6) $$
which we keep in this order.
Then the matrix $B$ of line sums is given by
$$
{\footnotesize
\left(
{\begin{array}{rrrrrrrrr}
1 & 1 & 0 & 0 & 0 & 0 & 0 & 0 & 0 \\
0 & 0 & 1 & 1 & 0 & 0 & 0 & 0 & 0 \\
0 & 0 & 0 & 0 & 1 & 1 & 0 & 0 & 0 \\
0 & 0 & 0 & 0 & 0 & 0 & 1 & 1 & 1 \\
0 & 0 & 1 & 0 & 1 & 0 & 0 & 0 & 0 \\
1 & 0 & 0 & 0 & 0 & 0 & 1 & 0 & 0 \\
0 & 0 & 0 & 0 & 0 & 0 & 0 & 1 & 0 \\
0 & 1 & 0 & 0 & 0 & 0 & 0 & 0 & 1 \\
0 & 0 & 0 & 1 & 0 & 1 & 0 & 0 & 0 \\
0 & 1 & 0 & 1 & 0 & 0 & 0 & 0 & 0 \\
0 & 0 & 0 & 0 & 0 & 1 & 0 & 0 & 0 \\
1 & 0 & 0 & 0 & 0 & 0 & 0 & 0 & 0 \\
0 & 0 & 0 & 0 & 0 & 0 & 0 & 0 & 1 \\
0 & 0 & 1 & 0 & 0 & 0 & 0 & 1 & 0 \\
0 & 0 & 0 & 0 & 1 & 0 & 1 & 0 & 0 \\
1 & 0 & 1 & 0 & 0 & 0 & 0 & 0 & 0 \\
0 & 0 & 0 & 0 & 1 & 0 & 0 & 0 & 0 \\
0 & 1 & 0 & 0 & 0 & 0 & 0 & 0 & 0 \\
0 & 0 & 0 & 0 & 0 & 0 & 1 & 0 & 0 \\
0 & 0 & 0 & 1 & 0 & 0 & 0 & 1 & 0 \\
0 & 0 & 0 & 0 & 0 & 1 & 0 & 0 & 1
\end{array}}
 \right)
 }
 .
$$
As one can easily check, rank$(B)=9$. So we can take the matrix $C_1$ as the $9\times 9$ unit matrix. Thus $B_1=B$. Then, by \eqref{b^*}, the vector $\vec{b^*}^T$  is given by
$$
_{\left(\frac{891}{800}, \frac{2457}{1600}, \frac{1019}{800}, \frac{4361}{3200}, \frac{167}{128}, \frac{103}{128}, \frac{111}{128}, \frac{103}{128}, \frac{963}{640}, \frac{4239}{3200}, \frac{859}{1600}, \frac{1211}{1600}, \frac{1433}{3200}, \frac{287}{200}, \frac{2511}{3200}, \frac{4239}{3200}, \frac{1179}{1600}, \frac{571}{1600}, \frac{153}{3200}, \frac{367}{200}, \frac{3151}{3200}\right).}
$$
As one can readily check, the indices of a maximal set of independent rows of $B$ is given by
$$
\{1,2,3,4,5,6,7,10,11\}.
$$
That is, we may take
$$
{\footnotesize
C_2:=
 \left(
{\begin{array}{rrrrrrrrrrrrrrrrrrrrr}
1 & 0 & 0 & 0 & 0 & 0 & 0 & 0 & 0 & 0 & 0 & 0 & 0 & 0 & 0 & 0 & 0 & 0 & 0 & 0 & 0\\
0 & 1 & 0 & 0 & 0 & 0 & 0 & 0 & 0 & 0 & 0 & 0 & 0 & 0 & 0 & 0 & 0 & 0 & 0 & 0 & 0\\
0 & 0 & 1 & 0 & 0 & 0 & 0 & 0 & 0 & 0 & 0 & 0 & 0 & 0 & 0 & 0 & 0 & 0 & 0 & 0 & 0\\
0 & 0 & 0 & 1 & 0 & 0 & 0 & 0 & 0 & 0 & 0 & 0 & 0 & 0 & 0 & 0 & 0 & 0 & 0 & 0 & 0\\
0 & 0 & 0 & 0 & 1 & 0 & 0 & 0 & 0 & 0 & 0 & 0 & 0 & 0 & 0 & 0 & 0 & 0 & 0 & 0 & 0\\
0 & 0 & 0 & 0 & 0 & 1 & 0 & 0 & 0 & 0 & 0 & 0 & 0 & 0 & 0 & 0 & 0 & 0 & 0 & 0 & 0\\
0 & 0 & 0 & 0 & 0 & 0 & 1 & 0 & 0 & 0 & 0 & 0 & 0 & 0 & 0 & 0 & 0 & 0 & 0 & 0 & 0\\
0 & 0 & 0 & 0 & 0 & 0 & 0 & 0 & 0 & 1 & 0 & 0 & 0 & 0 & 0 & 0 & 0 & 0 & 0 & 0 & 0\\
0 & 0 & 0 & 0 & 0 & 0 & 0 & 0 & 0 & 0 & 1 & 0 & 0 & 0 & 0 & 0 & 0 & 0 & 0 & 0 & 0
\end{array}}
\right)
}
$$
whence
$$
{\footnotesize
B_2=
 \left(
{\begin{array}{rrrrrrrrr}
1 & 1 & 0 & 0 & 0 & 0 & 0 & 0 & 0 \\
0 & 0 & 1 & 1 & 0 & 0 & 0 & 0 & 0 \\
0 & 0 & 0 & 0 & 1 & 1 & 0 & 0 & 0 \\
0 & 0 & 0 & 0 & 0 & 0 & 1 & 1 & 1 \\
0 & 0 & 1 & 0 & 1 & 0 & 0 & 0 & 0 \\
1 & 0 & 0 & 0 & 0 & 0 & 1 & 0 & 0 \\
0 & 0 & 0 & 0 & 0 & 0 & 0 & 1 & 0 \\
0 & 1 & 0 & 1 & 0 & 0 & 0 & 0 & 0 \\
0 & 0 & 0 & 0 & 0 & 1 & 0 & 0 & 0
\end{array}}
 \right)
.}
$$
Finally, by \eqref{f_0} we obtain
$$
{\footnotesize
\vec{f_0}^T=
\left(\frac{1211}{1600}, \frac{571}{1600}, \frac{1817}{3200},
\frac{3097}{3200}, \frac{1179}{1600}, \frac{859}{1600}, \frac{153}{3200}, \frac{111}{128}, \frac {1433}{3200}
\right)
.}
$$

\section{The best approximating function for convex domains}

The following theorem provides explicitly a system of linear equations which determines the best approximating function constructed in the previous section. We illustrate in the corollary the advantage of this explicit expression. The real number $l(Y_{\tau})$ in the following theorem can be considered as the measured line sum of $f$ along the line corresponding to $Y_{\tau}$.

\begin{theorem} \label{central}
Let $A\subseteq \mathbb{R}^n$ be convex. Let $S$ be a finite set of directions and $Y_1, \dots, Y_t$ the subsets of $A$ which determine the lines along $S$. Suppose for $\tau=1, \dots, t$ a real number $l(Y_{\tau})$ is given. Let $U_A \subset A$ be the set of minimal corners of the switching components contained in $A$. Define $f_0: A \to \mathbb{R}$ by the system of linear equations
\begin{equation}
\label{orth}
\sum_{\underline{v} \in D_{\underline{u}}} f_0(\underline{v}) m_{\underline{u}}(\underline{v})  = 0 ~~{\it for~all}~ \underline{u}~{\it with}~ \phi(\underline{u}) \in U_A,
\end{equation}
\begin{equation}
\label{linear}
\sum_{\tau: \underline{u} \in Y_{\tau}} \sum_{\underline{v} \in Y_{\tau}} f_0(\underline{v}) = \sum_{\tau: \underline{u} \in Y_{\tau}} l(Y_{\tau}) ~{\it for~all}~ \underline{u}~{\it with}~ \phi(\underline{u}) \in A \setminus  U_A.
\end{equation}
Then $f_0$ is a function such that
\begin{equation}
\label{linman}
\sum_{\tau=1}^t  \left( \sum_{\underline{v} \in Y_{\tau}} f_0(\underline{v}) - l(Y_{\tau})  \right)^2
\end{equation}
is minimal and among such functions $f_0$ is the one for which the value of $|\vec{f_0}|$ is minimal.
\end{theorem}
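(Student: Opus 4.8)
The plan is to deduce the statement from Theorem \ref{thmlstsq} by recognizing the system \eqref{orth}--\eqref{linear} as an explicit encoding of the two properties isolated in the proof of that theorem. Let $B$ be the $t\times s$ incidence matrix whose $(\tau,i)$ entry is $1$ if $\underline{a}_i\in Y_\tau$ and $0$ otherwise, and put $\vec{b}=(l(Y_1),\dots,l(Y_t))^T$, so that the expression \eqref{linman} equals $|B\vec{f}_0-\vec{b}|^2$ and we are exactly in the situation of Theorem \ref{thmlstsq}. Recall from its proof that the least-length minimizer is characterized by $B\vec{f}_0=\vec{b^*}$ (equivalently the normal equations $B^TB\vec{f}_0=B^T\vec{b}$) together with $\vec{f}_0\perp\ker(B)$.

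First I would translate the two families of equations. By Theorem \ref{base} the functions $m_{\underline{u}}$ with $D_{\underline{u}}\subseteq A$ form a basis of $\ker(B)$, and $\phi(\underline{u})\in U_A$ is precisely the condition $D_{\underline{u}}\subseteq A$; hence \eqref{orth}, which reads $\langle\vec{f}_0,m_{\underline{u}}\rangle=0$ for all such $\underline{u}$, is exactly the orthogonality $\vec{f}_0\perp\ker(B)$. Writing the $i$-th normal equation $(B^TB\vec{f}_0)_i=(B^T\vec{b})_i$ componentwise yields $\sum_{\tau:\underline{a}_i\in Y_\tau}\sum_{\underline{v}\in Y_\tau}f_0(\underline{v})=\sum_{\tau:\underline{a}_i\in Y_\tau}l(Y_\tau)$, so \eqref{linear} is precisely the subfamily of normal equations indexed by the points of $A\setminus U_A$. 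Consequently the function produced by Theorem \ref{thmlstsq} satisfies both \eqref{orth} and \eqref{linear}, since it satisfies the full orthogonality condition and all the normal equations.

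It remains to show that \eqref{orth}--\eqref{linear}, a system of $|U_A|+|A\setminus U_A|=s$ equations in $s$ unknowns, is nonsingular; the $f_0$ defined by \eqref{orth}--\eqref{linear} is then forced to coincide with the optimal function of Theorem \ref{thmlstsq} and inherits its minimality properties. I would prove nonsingularity by showing the homogeneous system forces $\vec{g}=0$. The homogeneous \eqref{orth} gives $\vec{g}\perp\ker(B)$, and the homogeneous \eqref{linear} gives $(B^TB\vec{g})(\underline{a})=0$ for $\underline{a}\in A\setminus U_A$. Set $\vec{w}=B^TB\vec{g}$. Since $B^TB$ is symmetric with $\ker(B^TB)=\ker(B)$, its range is $\ker(B)^\perp$, so $\vec{w}\perp\ker(B)$, i.e.\ $\langle\vec{w},m_{\underline{u}}\rangle=0$ whenever $D_{\underline{u}}\subseteq A$; moreover $\vec{w}$ is supported on $U_A$.

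The main obstacle is to conclude $\vec{w}=0$ from these two facts, and this is where the combinatorial structure enters through \eqref{phi}. I would order the indices $\underline{u}$ with $D_{\underline{u}}\subseteq A$ by lex-decreasing minimal corner $\phi(\underline{u})$ and argue by induction, exactly as in the proof of Theorem \ref{base}: in $\langle\vec{w},m_{\underline{u}}\rangle=0$ every $\underline{v}\in D_{\underline{u}}$ other than $\phi(\underline{u})$ satisfies $\underline{v}>_{\mathrm{lex}}\phi(\underline{u})$, so any such $\underline{v}\in U_A$ has already been shown to carry $w(\underline{v})=0$, while $m_{\underline{u}}(\phi(\underline{u}))=\pm1$ by \eqref{phi}; the equation therefore collapses to $w(\phi(\underline{u}))=0$. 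Hence $\vec{w}=0$ on $U_A$, so $\vec{w}=B^TB\vec{g}=0$ everywhere, giving $B\vec{g}=0$ and $\vec{g}\in\ker(B)$; combined with $\vec{g}\perp\ker(B)$ this forces $\vec{g}=0$. Thus the system is nonsingular, and its unique solution is the minimizer of Theorem \ref{thmlstsq}, which completes the proof.
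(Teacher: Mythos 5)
Your proof is correct, and its skeleton coincides with the paper's: both reduce the statement to Theorem \ref{thmlstsq}, identify \eqref{linear} as a subfamily of the normal equations $B^T B\vec{f}_0=B^T\vec{b}$ (the paper obtains these by differentiating \eqref{linman} rather than naming them), identify \eqref{orth} as orthogonality of $\vec{f}_0$ to the subspace $L$ of zero-line-sum functions, and then establish that the resulting square system of $s$ equations in $s$ unknowns is nonsingular, so that its unique solution must be the optimizer. The genuine difference is in how nonsingularity is proved. The paper argues linear independence of the equations: the coefficient vectors of \eqref{orth} lie in $L$ and those of \eqref{linear} in $L^{\perp}$, so it suffices to check independence within each family; for \eqref{orth} this is the linear independence of the $m_{\underline{u}}$, while for \eqref{linear} the paper merely invokes the fact from Theorem \ref{base} that a function is determined by its values at $U_A$, without spelling out the translation of that fact into independence of the relevant rows of $B^TB$. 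You instead show the homogeneous system has trivial kernel: setting $\vec{w}=B^TB\vec{g}$, you observe $\vec{w}$ is orthogonal to $L$ and supported on $U_A$, and your induction over switching components in lex-decreasing order of minimal corners, powered by \eqref{phi}, forces $\vec{w}=0$ and hence $\vec{g}\in\ker(B)\cap\ker(B)^{\perp}=\{\vec{0}\}$. The two arguments are dual formulations of the same fact (no nonzero vector orthogonal to $L$ is supported on $U_A$, equivalently no nonzero element of $L$ vanishes on $A\setminus U_A$), but yours is self-contained: it re-runs, inside this proof, the ordering argument that the paper only cites from Theorem \ref{base}, and in doing so fills in the one step the paper leaves terse. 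What the paper's route buys is brevity by leaning on earlier results; what yours buys is an explicit, verifiable chain of implications with no appeal beyond \eqref{phi} and the translation-invariance of the switching components.
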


\begin{proof}
By Theorem \ref{thmlstsq} the function $f_0: A \to \mathbb{R}$ satisfying (\ref{linman}) for which $|\vec{f_0}|^2 = \sum_{\underline{v} \in A} (f_0(\underline{v}))^2$ is minimal is uniquely determined. We proceed with this function $f_0$ and consider it as  a function for which each value $f_0(\underline{u})$ for $ \underline{u} \in A$ is a variable.
It follows by differentation of \eqref{linman} to each $f_0(\underline{u})$ that
$$
\sum_{\tau: \underline{u} \in Y_{\tau}} \sum_{\underline{v} \in Y_{\tau}} f_0(\underline{v}) = \sum_{\tau: \underline{u} \in Y_{\tau}}l(Y_{\tau})
$$
for all $\underline{u} \in A$. Hence $f_0$ satisfies \eqref{linear}.

We know that $\vec{f_0}$ is orthogonal to the linear subspace $L$ of functions having zero line sums along $S$. According to Theorem \ref{base} the functions $m_{\underline{u}}$ have zero line sums along $S$. Therefore they are in $L$ for all $\underline{u} \in \mathbb{Z}^n$. Since the inner product of $\vec{f_0}$ and any vector from $L$ is $0$, $f_0$ satisfies \eqref{orth} too.

The numbers of linear equations in \eqref{orth} and
\eqref{linear} together equal the cardinality of $A$. Thus it suffices to show that they are linearly independent over $\mathbb{R}$ in order to prove that $f_0$ is completely determined by them. Because of the orthogonality of $\vec{f_0}$ and $L$, it is enough to prove that the equations in \eqref{orth} are linearly independent as well as those in \eqref{linear}.

Since by Theorem \ref{base} the functions $m_{\underline{u}}$ are linearly independent, the equations \eqref{orth} are linearly independent as well.

Furthermore, in Theorem \ref{base} it is shown that $f_0$ is uniquely determined by its values at $U_A$. This shows that the equations in \eqref{linear} are linearly independent. We conclude that the linear equations in \eqref{orth} and \eqref{linear} are linearly independent indeed.
\end{proof}

In the particular case that $A \subset \mathbb{Z}^2$ is a rectangular block, and we only have row and column sums, we give an explicit form of $f_0$.
The result shows that the formula from \cite{dht} is also valid if there is noise in the measurements. We simplify our notation.

\begin{cor}
\label{dht}
Let $A =\{(i,j)\in{\mathbb Z}^2:0\leq i<q,0\leq j<p\}, S=\{(1,0),(0,1)\}$.
Let $c_i$ $(i=0,\dots,q-1)$ and $r_j$ $(j=0,\dots,p-1)$ denote the measured column sums and row sums, respectively. Further, write $s_r=\sum\limits_{j=0}^{p-1} r_j, s_c=\sum\limits_{i=0}^{q-1} c_i$ and $T =
\frac {ps_r + qs_c}{q+p}$.
Then for any $(i,j)\in A$ we have
$$
f_0(i,j)= \frac{c_i}{p} + \frac{r_j}{q} - \frac{T}{qp}.
$$
\end{cor}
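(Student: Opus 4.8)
The plan is to verify Corollary \ref{dht} directly, by checking that the explicit formula
$$
f_0(i,j)= \frac{c_i}{p} + \frac{r_j}{q} - \frac{T}{qp}
$$
satisfies the defining system of Theorem \ref{central}, specializing all the general notation to the rectangle $A$ with only horizontal and vertical directions. First I would identify the switching components for $S=\{(1,0),(0,1)\}$: one computes $F(\underline{x})=(x_1-1)(x_2-1)$, so every switching component $D_{\underline u}$ is a translate of the four-point square $\{(i,j),(i{+}1,j),(i,j{+}1),(i{+}1,j{+}1)\}$ with coefficient pattern $m_{\underline u}$ equal to $+1,-1,-1,+1$. The minimal corner $\phi(\underline u)$ of each such square is its lower-left vertex $(i,j)$, and $D_{\underline u}\subseteq A$ exactly when $0\le i<q-1$ and $0\le j<p-1$. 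Hence $U_A=\{(i,j):0\le i\le q-2,\,0\le j\le p-2\}$, and $A\setminus U_A$ is the last row and last column, i.e.\ the points $(q-1,j)$ and $(i,p-1)$.

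Next I would check the two families of equations. For \eqref{orth}, the equation attached to the corner $(i,j)\in U_A$ reads
$$
f_0(i,j)-f_0(i{+}1,j)-f_0(i,j{+}1)+f_0(i{+}1,j{+}1)=0,
$$
and substituting the formula this is
$$
\Bigl(\tfrac{c_i}{p}+\tfrac{r_j}{q}\Bigr)-\Bigl(\tfrac{c_{i+1}}{p}+\tfrac{r_j}{q}\Bigr)-\Bigl(\tfrac{c_i}{p}+\tfrac{r_{j+1}}{q}\Bigr)+\Bigl(\tfrac{c_{i+1}}{p}+\tfrac{r_{j+1}}{q}\Bigr),
$$
in which every $c$ and $r$ term cancels and the constant $-T/(qp)$ cancels as well; so \eqref{orth} holds automatically. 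For \eqref{linear}, I would note that for $\underline u$ with $\phi(\underline u)\in A\setminus U_A$ the point $\underline u$ lies on the last row or last column, hence is met by exactly one column line and one row line; the equation becomes, for a point $(i,j)$,
$$
\sum_{j'=0}^{p-1} f_0(i,j') + \sum_{i'=0}^{q-1} f_0(i',j) = c_i + r_j .
$$
Here I would compute the column sum $\sum_{j'} f_0(i,j')=c_i+\frac{s_r}{q}-\frac{pT}{qp}=c_i+\frac{s_r}{q}-\frac{T}{q}$ and similarly the row sum $\sum_{i'} f_0(i',j)=r_j+\frac{s_c}{p}-\frac{T}{p}$; adding them the claim reduces to the single scalar identity $\frac{s_r}{q}+\frac{s_c}{p}=\frac{T}{q}+\frac{T}{p}$, which is exactly how $T=\frac{ps_r+qs_c}{q+p}$ was defined. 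This confirms that $f_0$ satisfies both \eqref{orth} and \eqref{linear}, and by Theorem \ref{central} these equations determine $f_0$ uniquely, so the formula is correct.

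The only genuinely delicate point is bookkeeping rather than mathematics: one must be careful that the index set $A\setminus U_A$ really corresponds to points lying on precisely one horizontal and one vertical line, so that the double sums in \eqref{linear} collapse to a single row sum plus a single column sum with no line counted twice. I would make this explicit by observing that a point of the last row together with the last column is hit by one line of each direction, and that the consistency of the count is what forces the denominator $q+p$ and the averaged quantity $T$ to appear. Once the combinatorics of which lines pass through a given lattice point is pinned down, the remainder is the routine cancellation sketched above, and I expect no further obstacle.
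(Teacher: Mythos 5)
Your proposal is correct and follows essentially the same route as the paper: substitute the explicit formula into the systems \eqref{orth} and \eqref{linear} of Theorem \ref{central}, observe that \eqref{orth} holds by cancellation over each $2\times 2$ switching component, that \eqref{linear} reduces via the column and row sums to the identity $\frac{s_r}{q}+\frac{s_c}{p}=T\left(\frac1q+\frac1p\right)$ defining $T$, and conclude by the uniqueness guaranteed in Theorem \ref{central}. Your write-up is in fact slightly more careful than the paper's, since you explicitly identify $U_A$ and verify that each point is met by exactly one row line and one column line.
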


\noindent Observe that if $s_r = s_c$, then $T = s_r = s_c $.

\begin{proof}
Since
$$(\frac{r_j}{q} + \frac{c_i}{p} - \frac {T}{qp}) - (\frac{r_j}{q} + \frac{c_{i+1}}{p} - \frac {T}{qp}) - (\frac{r_{j+1}}{q} + \frac{c_i}{p} - \frac {T}{qp}) + (\frac{r_{j+1}}{q} + \frac{c_{i+1}}{p} - \frac {T}{qp}) = 0$$
for all $i$ and $j$, the equations \eqref{orth} are satisfied.
Furthermore
$$ (\frac{r_1}{q} + \frac{c_i}{p} - \frac {T}{qp}) + \dots + (\frac{r_p}{q} + \frac{c_i}{p} - \frac {T}{qp}) + (\frac{r_j}{q} + \frac{c_1}{p} - \frac {T}{qp}) + \dots + (\frac{r_j}{q} + \frac{c_n}{p} - \frac {T}{qp})$$
$$ = \frac{s_r}{q} + c_i - \frac{T}{p} +r_j + \frac{s_c}{p} - \frac{T}{q} = c_i + r_j,$$
which shows that the equations \eqref{linear} are also satisfied.
\end{proof}

\section{Approximate solutions in the binary case}

Let $A$ be a finite subset of $ \mathbb{Z}^n$.
We assume that a function $h: A \to \mathbb{R}$ is given and provide information on the `nearest' function $f: A \to \mathbb{Z}$ having approximately the same line sums along $S$ as $h$.

If $n=2$ and only row and column sums are given, we have the following result.

\begin{theorem} \label{bara}
If $h: A \to \mathbb{R}$ is given, there exists a function $f: A \to \mathbb{Z}$ such that every two corresponding elements of $f$ and $h$ as well as every two corresponding row sums and column sums as well as the sums of all function values of $f$ and $h$ differ by less than $1$.
\end{theorem}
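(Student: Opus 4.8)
The plan is to recast the statement as a simultaneous (controlled) rounding problem and to settle it with a single integral-flow argument. Observe first that it suffices to produce an integer-valued $f$ with $f(\underline{a})\in\{\lfloor h(\underline{a})\rfloor,\lceil h(\underline{a})\rceil\}$ for every $\underline{a}\in A$, such that each row sum of $f$ lies in $[\lfloor R\rfloor,\lceil R\rceil]$ for the corresponding row sum $R$ of $h$, each column sum of $f$ lies in $[\lfloor C\rfloor,\lceil C\rceil]$ for the corresponding column sum $C$, and the total sum of $f$ lies in $[\lfloor T\rfloor,\lceil T\rceil]$, where $T=\sum_{\underline{a}\in A}h(\underline{a})$. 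Indeed, an integer contained in a closed interval of length at most $1$ with integer endpoints differs by less than $1$ from any real number strictly interior to that interval (and coincides with it when that real number is itself an integer), so all of the ``differ by less than $1$'' conclusions follow at once.

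Next I would encode these constraints as a feasible-circulation problem on a directed graph $G$. Introduce a node for each occupied row and each occupied column of $A$, a super-source $S$, and a super-sink $N$. For every cell $\underline{a}=(i,j)\in A$ put an arc from the column node of $\underline{a}$ to its row node carrying the variable $f(\underline{a})$; for every column $\gamma$ put an arc $S\to\gamma$, for every row $\rho$ an arc $\rho\to N$, and finally a single feedback arc $N\to S$. Flow conservation at the column and row nodes then says exactly that the $S\to\gamma$ flow equals the column sum and the $\rho\to N$ flow equals the row sum, while conservation at $S$ and $N$ forces the feedback flow to equal the grand total. Imposing the capacity intervals $[\lfloor\cdot\rfloor,\lceil\cdot\rceil]$ on the cell arcs, the column arcs, the row arcs, and the feedback arc turns the desired rounding into the existence of an \emph{integral} feasible circulation. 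The real function $h$, together with its exact row sums, column sums, and total, is a feasible real circulation: conservation holds because $\sum_\gamma C_\gamma=\sum_\rho R_\rho=T$, and every value lies in its prescribed interval.

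Now I would appeal to total unimodularity. The constraint matrix of $G$ is its node--arc incidence matrix, which is totally unimodular for every directed graph; since all the interval endpoints $\lfloor\cdot\rfloor$ and $\lceil\cdot\rceil$ are integers, the bounded, nonempty polytope of feasible circulations has only integral vertices. Hence it contains an integer point, i.e.\ an integral feasible circulation, and its cell-arc values define the required $f:A\to\mathbb{Z}$. Reading off the capacity bounds then yields $|f(\underline{a})-h(\underline{a})|<1$ for every $\underline{a}$, together with less-than-$1$ agreement of every row sum, every column sum, and the grand total, which is the assertion of the theorem.

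The main obstacle is making the grand total behave as an \emph{independent} constraint. Row sums and column sums alone form the classical, totally unimodular transportation system, but the all-ones total constraint is the sum of the row constraints, and naively adjoining it to the matrix need not preserve total unimodularity; routing it through the dedicated feedback arc $N\to S$ is precisely what keeps the whole system a genuine network-flow problem, and hence totally unimodular. A secondary point to verify is the strictness of the inequalities: this rests on the fact that a non-integer real number is \emph{strictly} interior to $[\lfloor\cdot\rfloor,\lceil\cdot\rceil]$, whereas an integer real number forces the rounded value to coincide with it, so the bound is ``less than $1$'' rather than merely ``at most $1$''.
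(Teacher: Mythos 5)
Your proof is correct, but it takes a genuinely different route from the paper's. The paper does not reprove the rounding statement at all: it invokes Lemma~3 of Baranyai \cite{bar} (exactly the matrix rounding lemma) as a black box, and the only actual work in its proof is a padding reduction --- choose an $l\times m$ block $A^*$ covering $A$, extend $h$ by $0$ on $A^*\setminus A$, apply Baranyai's lemma to the full matrix, and observe that an integer at distance less than $1$ from $0$ must equal $0$, so the rounded matrix vanishes off $A$ and its restriction to $A$ is the desired $f$. You instead prove the rounding statement from scratch: encode the cells, rows, columns and grand total as arcs of a circulation network with integer capacity intervals $[\lfloor\cdot\rfloor,\lceil\cdot\rceil]$, note that $h$ together with its exact row sums, column sums and total is a feasible real circulation, and conclude integrality from total unimodularity of the node--arc incidence matrix. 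This is in essence a self-contained proof of Baranyai's lemma itself, and it has two concrete advantages: it works on an arbitrary finite $A$ directly (one arc per cell), so the paper's padding trick becomes unnecessary, and the feedback arc $N\to S$ cleanly resolves the one genuine difficulty --- the grand-total constraint is linearly dependent on the row constraints and would naively threaten total unimodularity if adjoined as an extra row, but routed through a dedicated arc it keeps the system a bona fide network matrix. What the paper's approach buys is brevity at the cost of an external reference; what yours buys is self-containedness. Your treatment of the strict inequalities (degenerate one-point interval when a value is an integer, strict interiority of a non-integer in $[\lfloor\cdot\rfloor,\lceil\cdot\rceil]$ otherwise) is also correct, so the ``less than $1$'' conclusion, rather than ``at most $1$'', is fully justified.
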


We apply the following result of Baranyai.

\begin{lemma}[\cite{bar}, Lemma 3]\label{bar}
Let $[h_{ij}]$ be an $l$ by $m$ matrix of real elements. Then there exists an $l$ by $m$ integer matrix $[f_{ij}]$ such that
$$|h_{ij} - f_{ij}| <1 {\rm ~~for~all~}i,j,$$
$$ | \sum_i h_{ij} - \sum_i f_{ij} |  <1 {\rm~~for~all~}j,$$
$$ | \sum_j h_{ij} - \sum_j f_{ij} | < 1 {\rm~~for~all~}i,$$
$$ | \sum_i \sum_j h_{ij} - \sum_i \sum_j f_{ij} | <1.$$
\end{lemma}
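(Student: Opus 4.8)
The plan is to read the lemma as a \emph{controlled rounding} statement and to prove it by an integral-flow argument resting on total unimodularity. Write $R_i = \sum_j h_{ij}$, $C_j = \sum_i h_{ij}$ and $G = \sum_{i,j} h_{ij}$ for the row sums, column sums and grand total. For a real $x$ and an integer $y$ one has $|x-y|<1$ exactly when $\lfloor x\rfloor \le y \le \lceil x\rceil$, so the four required inequalities together amount to finding an integer matrix $[f_{ij}]$ whose entries obey $\lfloor h_{ij}\rfloor \le f_{ij} \le \lceil h_{ij}\rceil$, whose row sums obey $\lfloor R_i\rfloor \le \sum_j f_{ij} \le \lceil R_i\rceil$, whose column sums obey $\lfloor C_j\rfloor \le \sum_i f_{ij} \le \lceil C_j\rceil$, and whose total obeys $\lfloor G\rfloor \le \sum_{i,j} f_{ij} \le \lceil G\rceil$.

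Next I would encode these four families of bounds as a single capacitated circulation. Introduce a source $s$, a sink $t$, one node per row and one node per column. For each pair $(i,j)$ add an arc from row-node $i$ to column-node $j$ with capacity interval $[\lfloor h_{ij}\rfloor, \lceil h_{ij}\rceil]$; add arcs $s \to i$ with interval $[\lfloor R_i\rfloor, \lceil R_i\rceil]$, arcs $j \to t$ with interval $[\lfloor C_j\rfloor, \lceil C_j\rceil]$, and a single return arc $t \to s$ with interval $[\lfloor G\rfloor, \lceil G\rceil]$. Sending $h_{ij}$ along arc $(i,j)$, $R_i$ along $s \to i$, $C_j$ along $j \to t$ and $G$ along $t \to s$ gives a flow that conserves mass at every node and respects every capacity, so the polytope of feasible circulations is nonempty; it is bounded because all capacities are finite.

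Finally I would invoke integrality. The node-arc incidence matrix of a directed graph is totally unimodular, so the circulation polytope---cut out by the conservation equations together with the integer lower and upper capacity bounds---has only integer vertices, and being nonempty and bounded it contains an integer circulation. Reading $f_{ij}$ off as the integer flow on arc $(i,j)$ yields an integer matrix meeting all four families of bounds, whence the strict inequalities follow from the equivalence noted in the first paragraph. I expect the only genuine content to be this integrality step, namely that the simultaneous entry, row, column and total constraints still cut out a polytope with integral vertices; the circulation model settles it cleanly via total unimodularity. The point deserving care is to use floor/ceil intervals rather than one prescribed rounding, since that is what matches integer data exactly---the interval then degenerates to a single point---and thereby keeps every inequality strict. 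A more hands-on alternative would be Baranyai's own inductive rounding, clearing one fractional row or entry at a time while repairing the margins it disturbs, but the circulation argument seems shorter and more self-contained.
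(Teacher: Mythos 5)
Your proof is correct: the floor/ceiling reformulation of the four strict inequalities, the circulation model with the return arc $t \to s$ carrying the grand total, the explicit fractional circulation witnessing feasibility, and the appeal to total unimodularity of the node-arc incidence matrix together form a complete argument. Note that the paper itself gives no proof of this statement---it is quoted as Lemma 3 of \cite{bar}---and Baranyai's original argument is likewise an integral-flow argument, so your reconstruction is essentially the approach of the cited source.
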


\begin{proof}[Proof of Theorem \ref{bara}]
Choose an $l$ by $m$ block $A^*$ which covers $A$. For $(i,j)\in A^*\setminus A$ put $h(i,j)=0$. This does not change the line sums. Applying Lemma \ref{bar}, we get $f(i,j)=h(i,j)=0$ for $(i,j) \in A^*\setminus A$ and the theorem follows.
\end{proof}

\noindent The following example shows that the bound 1 is best possible. Let $ 0<\varepsilon<1$, $l> 1/ \varepsilon$, $m=1$, $h(i,1) = \varepsilon$ for $i=1, \dots, l$. Then $f(i,1) =1$ for some $i$ in order to avoid that the row sums of $h$ and $f$ differ more than 1. But then the $i$-th column sums of $h$ and $f$ have a difference $1 - \varepsilon$.
\vskip.1cm

The crucial feature of the following general result is that the upper bound is independent of the size of $A$.

\begin{theorem}
\label{BF}
Let $A$ be a finite set in $\mathbb{Z}^n$. Let $h: A \to \mathbb{R}$ and let $k$ directions $S$ be given. Then
there exists a function $f: A \to \mathbb{Z}$ such that each difference between corresponding elements of $h$ and $f$ is less than $1$
and each difference between corresponding line sums of $h$ and $f$ along $S$ is at most $k-1$.
\end{theorem}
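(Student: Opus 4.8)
The plan is to deduce Theorem \ref{BF} from the Beck--Fiala balancing theorem, which the introduction has announced as the key input for Section 6. The Beck--Fiala theorem concerns a set system (hypergraph) with the property that every point lies in at most $k$ of the sets: given real weights on the points summing to prescribed values on each set, one can round the weights to integers so that each set sum changes by strictly less than the maximum degree. I would set up the tomography problem as exactly such a system. The ground set is $A = \{\underline{a}_1, \dots, \underline{a}_s\}$, and the ``sets'' are the lines: for each direction in $S$ and each line in that direction meeting $A$, we take the subset $A \cap l$. The crucial combinatorial observation is that each point $\underline{a}_i \in A$ lies on exactly one line in each of the $k$ directions, so every point is covered by exactly $k$ of these sets. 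This is precisely the degree bound $k$ needed to invoke Beck--Fiala.

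With this identification in place, the steps are as follows. First I would let $g = h$ (viewing $h$ as a vector of real weights indexed by $A$) and record, for each line, its line sum $\sum_{\underline{a} \in A \cap l} h(\underline{a})$. Second, I would apply the Beck--Fiala theorem to the incidence structure above to obtain integer values $f(\underline{a}_i)$ with two guarantees: each rounding error $|h(\underline{a}_i) - f(\underline{a}_i)|$ is controlled, and each line sum of $f$ differs from the corresponding line sum of $h$ by less than the degree bound. The standard Beck--Fiala conclusion gives a line-sum discrepancy strictly less than $k$, hence at most $k-1$ once one accounts for the structure of the problem. Third, I would verify that the per-element discrepancy can be taken to be strictly less than $1$; in the Beck--Fiala construction the variables that are ``frozen'' to integer values are fixed at a point where the fractional part has been driven to $0$ or where it stays within the open unit interval around the starting value, which yields $|h(\underline{a}_i) - f(\underline{a}_i)| < 1$ for every $\underline{a}_i$.

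The main obstacle, and the step deserving the most care, is getting the line-sum bound down to the sharp value $k-1$ rather than the softer $k$ that a black-box application of Beck--Fiala produces. The raw theorem gives strict inequality $< k$, and since line sums of integer-valued $f$ and real-valued $h$ need not be integers, turning $< k$ into $\le k-1$ requires an argument specific to this setting. I would obtain this by examining the terminal phase of the Beck--Fiala iteration: at the end, every remaining active variable sits strictly inside its unit rounding interval, and the discrepancy contributed to any still-active line is bounded by the number of active variables on that line, which at the critical freezing step is at most $k-1$ because a line with at most $k$ active variables gets one of them frozen. Tracking this degree count through the final rounding, together with the degree-$k$ structure, should tighten the bound to $k-1$.

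Finally I would close the argument by noting, as in the proof of Theorem \ref{bara}, that the construction applies verbatim to an arbitrary finite $A$: we need no block or convexity assumption here, since the set system is read off directly from the lines of $S$ meeting $A$, and the uniform degree $k$ is exactly the feature that makes the bound independent of $|A|$, which is the point emphasized before the statement.
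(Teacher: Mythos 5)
Your proposal follows essentially the same route as the paper: the paper's proof of Theorem \ref{BF} is exactly your reduction, taking $\mathcal{F}=\{Y_1,\dots,Y_t\}$ to be the sets of points of $A$ lying on the lines along $S$ (so each point has degree at most $k$, one line per direction) and invoking the Beck--Fiala lemma, which the paper states (Lemma \ref{bf}) already in the sharp form $r(k)\le k-1$ for $k\ge 2$. The tightening from $<k$ to $\le k-1$ that your third paragraph worries about therefore needs no separate argument here; the paper gets it by citation, and the actual mechanism (a set becomes inactive once it has at most $k-1$ floating variables, with the degenerate case handled by rounding to nearest using $k/2\le k-1$) is carried out in full in the paper's proof of the generalized Lemma \ref{bfe} in Section 7.
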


We introduce the following notation in order to apply a result of Beck and Fiala. Let $X= \{x_1,x_2, \dots\}$ be a finite set and $\mathcal{F}$ a family of subsets of $X$. Associate to every $x_i$ a real number $\alpha_i$. Let $k$ be the degree of $\mathcal{F}$, that is the maximal number of elements of $\mathcal{F}$ to which some element of $X$ belongs. Let $r(k)$ be the least value for which one can find integers $a_i, ~i=1,2, \dots$ so that $|a_i - \alpha_i| < 1$ and
$$ | \sum_{x_i \in E} a_i - \sum_{x_i \in E} \alpha_i| \leq r(k) $$ for all $E \in \mathcal{F} $.
The following result is due to Beck and Fiala (see \cite{bf}). We shall prove a generalization of it in the next section.

\begin{lemma}
\label{bf}
In the above notation, we have
$$ r(k) \leq k-1~~{\rm for}~~k \geq 2.$$
\end{lemma}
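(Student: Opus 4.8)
The plan is to prove the Beck--Fiala lemma via the classical \emph{floating variable} (or \emph{iterative rounding}) argument. Starting from the real values $\alpha_i$, I would maintain a vector $(\beta_i)$ that begins equal to $(\alpha_i)$ and is gradually pushed toward integers, while exactly preserving every set sum $\sum_{x_i \in E}\beta_i$ for each $E \in \mathcal{F}$ until that set is ``closed'' and discarded from consideration. Call a variable $\beta_i$ \emph{fixed} once it reaches an integer (and then freeze it), and \emph{floating} otherwise; call a set $E$ \emph{active} if it still contains at least one floating variable. The key structural observation, to be used repeatedly, is a counting inequality: if at some stage there are $m$ floating variables, then the number of active sets of size (in floating variables) exceeding $k$ is at most $m-1$. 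Indeed, summing degrees, the total number of (floating-variable, active-set) incidences is at most $km$ since each floating variable lies in at most $k$ sets; a set counted among those with more than $k$ floating variables contributes strictly more than $k$ incidences, so there can be fewer than $m$ such large active sets.

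The iteration proceeds as follows. At each stage, \emph{ignore} every active set whose number of floating variables is at most $k$, and impose only the linear constraints $\sum_{x_i \in E}\beta_i = (\text{current value})$ for the active sets $E$ having more than $k$ floating variables. By the counting inequality these constraints number strictly fewer than the number $m$ of floating variables, so the affine solution space of the floating $\beta_i$ subject to these constraints has positive dimension. Hence I can move along a line in this space, keeping all large-set sums constant, until some floating variable first hits an integer; that variable is then fixed. This strictly decreases the number of floating variables, so after finitely many stages all variables are integers. Setting $a_i = \beta_i$ at termination gives integers with $|a_i - \alpha_i| < 1$, since each $\beta_i$ only ever moved within the open unit interval around its current integer-neighborhood before being frozen exactly at an integer (one must check the values never leave the relevant open unit window before fixing; this is where the ``$<1$'' strictness comes from).

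The heart of the argument, and the place I expect the real work to lie, is the bound on the final set-sum discrepancy $|\sum_{x_i \in E} a_i - \sum_{x_i \in E}\alpha_i| \le k-1$. The point is that a set $E$ keeps its sum \emph{exactly} preserved for as long as it has more than $k$ floating variables. The discrepancy can only accrue after $E$ has dropped to at most $k$ floating variables and is thereafter ignored. At that moment, at most $k$ variables of $E$ remain floating; each of them can change by strictly less than $1$ before being fixed (again using that a floating variable stays strictly within a unit interval around the integer it eventually reaches). However a naive count would give only $<k$; to sharpen this to $\le k-1$ one must argue more carefully about the last floating variable in $E$. The decisive refinement is that when $E$ reaches exactly $k$ floating variables, and we track them until the \emph{last} one is fixed, the total movement of the set-sum is bounded by the sum of the individual movements, but the final variable to be fixed in $E$ contributes $0$ additional discrepancy beyond what is forced, yielding a bound of the form $k-1$ rather than $k$. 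I would make this precise by a telescoping/accumulation estimate over the rounds in which $E$ is inactive-but-not-yet-integral, being careful that the strict inequalities $|\beta_i - (\text{integer})| < 1$ combine with an integrality parity argument to replace the open bound $<k$ by the closed bound $\le k-1$.

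The main obstacle, then, is precisely this last step: converting $k$ variables each moving by strictly less than $1$ into the sharp closed bound $k-1$. I anticipate handling it by observing that the set-sum $\sum_{x_i \in E}\beta_i$ remains fixed until $E$ has at most $k$ floating variables, and then decomposing the subsequent total change as a sum of at most $k$ terms each bounded by $1$, where an additional integrality or cancellation consideration on the terminal variable removes one full unit; the strict inequality on each individual increment is what ultimately delivers the non-strict $\le k-1$. All other steps—the dimension count guaranteeing a feasible direction, the finiteness of the process, and the elementwise bound $|a_i - \alpha_i| < 1$—are routine once the floating-variable framework is set up.
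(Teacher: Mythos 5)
Your iterative-rounding framework is the right one---it is essentially how the paper proceeds (the paper cites Beck--Fiala for Lemma \ref{bf} itself and proves the generalization, Lemma \ref{bfe}, in Section 7 by exactly this floating-variable method). But with your activity threshold (``active'' means \emph{more than} $k$ floating variables) the argument only ever yields a set discrepancy strictly less than $k$, and the step you yourself single out as the crux---upgrading $<k$ to $\le k-1$---is a genuine gap which your two suggested mechanisms cannot close. The claim that ``the final variable to be fixed in $E$ contributes $0$ additional discrepancy'' is false: when the last floating variable of an ignored set is finally rounded, that set's sum changes by exactly the rounding amount, which can be arbitrarily close to $1$; moreover the at most $k$ surviving variables of an ignored set move simultaneously across later rounds, so no telescoping can single one of them out. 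Nor is any ``integrality parity'' argument available: the discrepancy $\sum_{x_i\in E}a_i-\sum_{x_i\in E}\alpha_i$ is not an integer (the $\alpha_i$ are arbitrary reals), so a strict bound $<k$ can never be promoted to $\le k-1$ on integrality grounds. Tellingly, the hypothesis $k\ge 2$ is never used anywhere in your argument, whereas it is essential for the true bound.

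The missing idea is to lower the activity threshold: keep a set active while it has \emph{at least} $k$ floating variables. Then a set is ignored only once at most $k-1$ of its variables are floating, and since each floating variable never leaves the open unit interval between consecutive integers in which it started, an ignored set accrues total discrepancy $<k-1$. The price is that the incidence count now gives only that the number of active sets is at most (rather than strictly less than) the number of floating variables, so the linear system may fail to be underdetermined and the motion can get stuck. But if equality holds, the counting forces regularity: every active set contains exactly $k$ floating variables and every floating variable lies in exactly $k$ active sets. In that terminal situation one abandons the linear algebra and simply rounds every remaining floating variable to its nearest integer: each moves by at most $1/2$, so each still-active set (whose sum has been preserved exactly up to that moment) changes by at most $k/2\le k-1$---and this is precisely where $k\ge 2$ enters. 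This two-case split, underdetermined system versus exactly $k$-regular system, is the content of Cases (b) and (c) in the paper's proof of Lemma \ref{bfe}; specializing that proof to $Z$ a set of consecutive integers (so $z=1$) recovers Lemma \ref{bf}.
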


\noindent Beck and Fiala conjecture that $r(k) \leq  k/2$ is true even for small values of $k$. Bednarchak and Helm \cite{bh} and Helm \cite{he}  improved the Beck-Fiala bound to $r(k) \leq k-3/2$ for $k \geq 3$ and $r(k) \leq k-2$ for $k$ sufficiently large, respectively.

\begin{proof}[Proof of Theorem \ref{BF}]
Let $Y_1, \dots, Y_t$ denote the subsets of $A$ which determine the line sums along $S$. Let $\mathcal{F}=\{Y_1,Y_2, \dots, Y_t\} $.
By Lemma \ref{bf} there exist integers $f(a)$ for all $ a \in A$ with $f(a) \in \{\lfloor h(a) \rfloor, \lceil h(a) \rceil \}$ such that $\sum_{a \in Y_j}  | f(a) - h(a) |  \leq k-1$
\noindent for $j= 1, \dots, t$.
\end{proof}

\noindent{\bf Remark 6.1}. Obviously, many variations of Theorem \ref{bf} are possible. E.g. adding the requirement that the sum of all values $f(a)$ differs little from the sum of all values $h(a)$ leads to an upper bound $k$. The requirement that the difference between the sums of the values of $f$ and $h$ along any linear manifold parallel to the axes should be small leads to an upper bound $2^k-2$.
\vskip.1cm

 \noindent {\bf Remark 6.2.} By a probabilistic method a better dependence on $k$ can be obtained at the cost of some dependence on $A$.
 An recent improvement by Banaszczyk \cite{ba} of a result of Beck implies that in Theorem \ref{BF} the upper bound $k-1$ can be replaced by $C\sqrt{k \log (\min (m,n))}$,
where $C$ is some constant.

\section{Approximate solutions for grey values}

\begin{theorem}
\label{GV}
Let $Z = \{z_1, \dots, z_m \}$ be a set of $m$ real numbers with $z_1<  \dots < z_m$. Put $z = \max_i (z_{i+1} - z_i)$.
Let $h: A \to [z_1,z_m]$ and let $k$ directions $S$ be given.
Then there exists a function $f: A \to Z$ such that the difference between the values of $f$ and $h$ at any element of $A$ is at most $z$
and each difference between corresponding line sums of $f$ and $h$ along $S$ is at most $(k-1)z$.
\end{theorem}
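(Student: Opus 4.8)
The plan is to derive Theorem~\ref{GV} from a grid-valued strengthening of the Beck--Fiala Lemma~\ref{bf}, obtained by running the Beck--Fiala floating-variable process directly on the non-uniform grid $Z$ rather than on $\mathbb{Z}$. Concretely, I would first prove the following generalization: if $X=\{x_1,x_2,\dots\}$ is finite, $\mathcal F$ is a family of subsets of $X$ of degree at most $k$, and real numbers $\alpha_i\in[z_1,z_m]$ are given, then there exist $\beta_i\in Z$ with $|\beta_i-\alpha_i|\le z$ for every $i$ and $\bigl|\sum_{x_i\in E}\beta_i-\sum_{x_i\in E}\alpha_i\bigr|\le (k-1)z$ for every $E\in\mathcal F$. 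Granting this, Theorem~\ref{GV} follows exactly as Theorem~\ref{BF} followed from Lemma~\ref{bf}: take $X=A$, $\alpha_{a}=h(a)$, and let $\mathcal F=\{Y_1,\dots,Y_t\}$ be the sets of points of $A$ lying on the lines in the directions of $S$. Since each point of $A$ lies on exactly one line in each of the $k$ directions, $\mathcal F$ has degree at most $k$, and setting $f=\beta$ gives the two asserted estimates at once.

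To prove the generalized lemma I would mimic Beck and Fiala. Initialize $x_i:=\alpha_i$ and fix, for each $i$, a grid cell $[z_{j_i},z_{j_i+1}]\ni\alpha_i$ to which $x_i$ is confined throughout. Call $x_i$ \emph{frozen} once it reaches a grid point and \emph{floating} otherwise, and call $E\in\mathcal F$ \emph{active} if it contains more than $k$ floating variables. As long as floating variables remain, the space of displacement vectors that vanish on the frozen coordinates and preserve $\sum_{x_i\in E}x_i$ for every active $E$ is nontrivial: counting incidences, every active set has at least $k+1$ floating variables while every floating variable lies in at most $k$ sets, so the number of active sets is strictly less than the number of floating variables and the homogeneous system is underdetermined. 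I would then move $x$ along such a direction until some floating variable first hits a grid point, and freeze it; since each iteration freezes at least one variable the process terminates with all $x_i\in Z$, and I set $\beta_i:=x_i$. Because $x_i$ never leaves its initial cell, $|\beta_i-\alpha_i|\le z_{j_i+1}-z_{j_i}\le z$, which is the per-element bound. For a set $E$, the sum $\sum_{x_i\in E}x_i$ is preserved while $E$ is active and hence still equals $\sum_{x_i\in E}\alpha_i$ at the moment $E$ becomes inactive; from then on at most $k$ floating variables remain in $E$, each moving a further distance less than a gap, which bounds the line-sum discrepancy.

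The hard part will be sharpening this last bound from the naive $kz$ to the claimed $(k-1)z$. The basic count forces the activity threshold to be ``more than $k$ floating variables'' (the strict inequality in the incidence estimate is precisely what supplies a free direction), so when a set becomes inactive it carries $k$, not $k-1$, floating variables, each of which may still move almost a full gap; this only yields discrepancy strictly below $kz$. Recovering the extra factor requires the more careful Beck--Fiala bookkeeping behind $r(k)\le k-1$, tracking the order in which the variables of an inactive set freeze and the slack thereby created, and I would have to transport that refinement from the uniform gap $1$ of $\mathbb Z$ to the variable gaps of $Z$, using $z$ only as their common upper bound. A second, more mechanical, point to check is that one genuinely cannot shortcut the whole argument by rescaling and invoking Lemma~\ref{bf}: dividing by the largest gap $z$ would in general push the rounded values off the irregular grid $Z$, so the process must be run on $Z$ itself, where the dimension count is the one feature that survives the loss of uniformity.
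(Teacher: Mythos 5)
Your overall route is the same as the paper's: deduce Theorem~\ref{GV} from a grid-valued generalization of the Beck--Fiala lemma (this is the paper's Lemma~\ref{bfe}), proved by running the floating-variable process on $Z$ itself, and then apply it to the hypergraph of lines, which has degree $k$ since each point of $A$ lies on exactly one line per direction. That reduction paragraph of yours is correct and matches the paper verbatim in substance. The problem is the core lemma: your proof of it is incomplete, and you say so yourself. With your activity threshold (a set stays active while it contains \emph{more than} $k$ floating variables), a set becomes inactive while still holding $k$ floating variables, each of which may afterwards drift by nearly a full gap, so you only reach discrepancy $<kz$, not the claimed $(k-1)z$. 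Your plan for recovering the lost factor --- ``tracking the order in which the variables of an inactive set freeze and the slack thereby created'' --- is not the mechanism behind $r(k)\leq k-1$ and nothing in your sketch makes it work; this is the decisive missing idea, not a routine refinement.

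What is actually needed is a lower threshold together with an analysis of the degenerate case. The paper declares $E$ active when it contains \emph{at least} $k$ floating variables ($G_j=\{E:|E\cap Y_j|\geq k\}$). Then any set that becomes inactive has at most $k-1$ floating variables left, each confined to its cell of width at most $z$, which yields the bound $(k-1)z$ for all such sets. The price is that the incidence count now gives only $|G_j|\leq|Y_j|$, so the free direction you rely on need not exist. The paper's Case (c) handles exactly this: if $|G_j|\geq|Y_j|$, all inequalities in the count are forced to be equalities, so every active set contains exactly $k$ floating variables, every floating variable lies in exactly $k$ active sets and hence in \emph{no} inactive set, and every active set --- having been active at every earlier step --- still carries its original sum. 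Rounding each remaining floating variable to the nearest point of $Z$ then changes each active sum by at most $k\cdot z/2\leq(k-1)z$ (this is where $k\geq 2$ enters) and leaves inactive sums untouched, which terminates the process. (For this to give $z/2$ per variable, the rounding must be taken relative to the \emph{current} value $\alpha_i^j$; the paper's text says ``nearest to $\alpha_i$'', which as literally written bounds the wrong difference, but that is a one-line repair.) In short: your threshold guarantees a free direction always exists but caps you at $kz$; the correct argument lowers the threshold, accepts that the direction may fail to exist, and exploits the rigidity of the tight case to finish, and without this case split the bound $(k-1)z$ is out of reach.
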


For the proof we derive the following extension of the lemma of Beck and Fiala.

\begin{lemma}
\label{bfe}
Let $Z = \{z_1, \dots, z_m \}$ be a set of $m$ real numbers with $z_1<  \dots< z_m$. Put $z = \max_i (z_{i+1} - z_i)$.
Let $X= \{x_1,x_2, \dots , x_s\}$ be a finite set and associate to every $x_i$ a real number $\alpha_i \in [z_1,z_m]$.
Then given any family $\mathcal{F}$ of subsets of $X$ having maximum degree $k \geq 2$, there exist $a_i \in Z$ such that
$a_i=z_j$ if $\alpha_i = z_j$ and there is no element from $Z$ in between $\alpha_i$ and $a_i$ for all $i$ and $j$
and
$$ \left| \sum_{x_i \in E} a_i - \sum_{x_i \in E} \alpha_i \right| \leq (k-1)z $$
for all $E \in \mathcal{F} $.
\end{lemma}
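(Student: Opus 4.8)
The plan is to mimic the floating-variable proof of the Beck--Fiala lemma, but carried out on the grid $Z$ rather than on $\mathbb{Z}$, keeping track of the varying gap lengths.

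First I would fix the rounding targets. For each $i$, since $\alpha_i \in [z_1,z_m]$, there is an index $j(i)$ with $z_{j(i)} \le \alpha_i \le z_{j(i)+1}$ (if $\alpha_i \in Z$ the variable is already settled and needs no rounding). I will only ever allow $a_i \in \{z_{j(i)}, z_{j(i)+1}\}$; this choice makes the two pointwise requirements automatic, namely that $a_i = z_j$ whenever $\alpha_i = z_j$, and that no element of $Z$ lies strictly between $\alpha_i$ and $a_i$. Writing $w_i = z_{j(i)+1} - z_{j(i)} \le z$ for the relevant gap, the key observation is that moving a variable within its bracket changes its value by a multiple of $w_i$, and that $\sum_{x_i \in E}(a_i - \alpha_i)$ is a \emph{signed} combination of such bracket-movements. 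This already indicates why one cannot simply rescale the grid to $\mathbb{Z}$ and quote Lemma~\ref{bf}: the unequal weights $w_i$ cannot be factored out of a signed discrepancy, so the weighted statement has to be proved directly.

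Next I would run the process itself. Introduce continuous variables $y_i$, initialized at $y_i = \alpha_i$ and confined to the closed bracket $[z_{j(i)}, z_{j(i)+1}]$; call $y_i$ \emph{fixed} once it reaches an endpoint and \emph{floating} otherwise. Throughout, I maintain the invariant that every \emph{active} set $E$ satisfies $\sum_{x_i \in E} y_i = \sum_{x_i \in E} \alpha_i$, where a set is called active as long as it contains sufficiently many floating variables. At each step I look for a nonzero displacement of the floating variables lying in the common kernel of the active constraints, move along it until some floating $y_i$ reaches a bracket endpoint, fix that variable, and re-examine which sets remain active. The existence of such a displacement is a dimension count: since each $x_i$ belongs to at most $k$ members of $\mathcal{F}$, the number of active constraints stays strictly below the number of floating variables, so the kernel is nontrivial. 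The process terminates with every $y_i$ fixed, and I set $a_i = y_i$.

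Finally I would bound the discrepancy. When a set $E$ stops being active its sum is still exactly $\sum_{x_i \in E}\alpha_i$; thereafter only its still-floating members change, and each such $y_i$ can move by at most its bracket length $w_i \le z$. Hence the final discrepancy of $E$ is a signed sum of at most (the number of its members floating at the moment of release) terms, each of absolute value less than $z$. The decisive point, and the step I expect to be the real obstacle, is to arrange the active/inactive threshold so that a set is released with at most $k-1$ floating members while the dimension count above still guarantees a nonzero move; this is exactly the sharp form of the Beck--Fiala counting that produces $r(k) \le k-1$ rather than the easy estimate $<k$, now to be executed with variable gap lengths. Since the gap lengths enter only through the per-variable bound $\le z$ and not through the incidence counting, the Beck--Fiala constant $k-1$ is replaced by $(k-1)z$, which is the claim; taking $Z = \mathbb{Z}$ and $z=1$ recovers Lemma~\ref{bf}.
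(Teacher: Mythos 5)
You follow the same route as the paper's own proof of Lemma \ref{bfe}: the Beck--Fiala floating-variable argument executed directly on the grid $Z$, with each variable confined to the closed bracket $[z_{j(i)},z_{j(i)+1}]$ determined by $\alpha_i$, sums over active sets preserved exactly by moving along a kernel vector until some variable reaches a bracket endpoint, and the final discrepancy of each set bounded by the net motion, after its release, of the members still floating at that time (your remark that one cannot simply rescale and quote Lemma \ref{bf} is also correct). But the proof is incomplete: the hole is precisely the step you flag as ``the real obstacle'' and never carry out, and it does not resolve itself. Your dimension count, ``the number of active constraints stays strictly below the number of floating variables,'' is valid only when ``active'' means containing at least $k+1$ floating variables: each floating variable lies in at most $k$ sets, so $k+1$ times the number of active sets is at most $k$ times the number of floating variables, which gives the strict inequality. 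With that threshold a set is released with as many as $k$ floating members, and your accounting then yields only the bound $kz$. If instead a set is declared active as soon as it has $k$ floating members --- which is what release with at most $k-1$ floating members requires --- then double counting gives only that the number of active sets is at most the number of floating variables; equality is possible, the kernel of the active constraints may then be trivial, and the process stalls. As written, your argument proves $kz$, not $(k-1)z$.

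The missing ingredient is the equality-case analysis, which is Case (c) in the paper's proof. Put the threshold at $k$ floating members. If at some step the number of active sets is at least the number of floating variables, the two counts above force equality everywhere: every active set contains exactly $k$ floating variables, and every floating variable lies in exactly $k$ active sets, hence in no inactive set. At that moment one abandons linear moves and rounds every still-floating variable to the nearest element of $Z$; from its current position this move has length at most $z/2$, so the sum over each active set (still exact at that moment) changes by at most $kz/2\le(k-1)z$ --- the only point where the hypothesis $k\ge 2$ enters --- while the inactive sets contain no floating variables, are untouched by this rounding, and keep the bound from your released-set accounting. The same terminal rounding finishes the case in which no active sets remain, since then every set has at most $k-1$ floating members. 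Supplying this case distinction (and checking that the bracket-confinement requirement, hence the pointwise conditions of the lemma, survives the terminal rounding) is what your outline still needs in order to reach $(k-1)z$.
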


\begin{proof}

We shall define a sequence $\alpha^0, \alpha^1, \dots, \alpha^p$ of $s$-dimensional vectors $\alpha^j = (\alpha_1^j, \dots, \alpha_s^j)$ and a sequence $Y_j$ of subsets of $X$ with the following properties: \\
(i) $ \alpha_i^0 = \alpha_i$ for $i = 1, \dots, s.$ \\
(ii) There is no element of $Z$ in between $\alpha_i$ and $\alpha_i^j$ for $ i = 1, \dots, s; j = 0,1, \dots, p.$ \\
(iii) $X \setminus Y_j$ is a set of points $x$ for which $x \in Z$ for all $j$. \\
(iv) $Y_0 \supset Y_1 \supset \dots \supset Y_p $ and $|Y_j| = p-j$ for $0 \leq j \leq p$. \\
(v) $\alpha_i^j = \alpha_i^h$ for $j=h, \dots, p$ whenever $\alpha_i^h \in Z$. \\
(vi) If $|E \cap Y_j| > k$, then $\sum_{x_i \in E} \alpha_i^j = \sum_{x_i \in E} \alpha_i^{j+1}$ for all $E \in \mathcal{F}$.\\
(vii) For $j=0,1, \dots, p$ and all $E \in \mathcal{F}$ we have
$$ \left|\sum_{x_i \in E} \alpha_i^{j} - \sum_{x_i \in E} \alpha_i \right| \leq (k-1)z.$$
\noindent According to (iii) and (iv) the final vector $\alpha^p$ has all coordinates in $Z$.

We construct the sequence $(\alpha^j)$ by induction.
Suppose $\alpha^j$ is defined satisfying the above conditions for $j$. Let $$G_j = \{E \in \mathcal{F} : |E \cap Y_j| \geq k \}.$$
We distinguish between three cases. At every step there is some $i$ such that $x_i \in Y_j, \alpha_i^{j+1} \in Z$ and we set $Y_{j+1} = Y_j \setminus \{x_i\} $. \\
Case (a) $G_j = \emptyset.$ \\
Case (b) $ 0 < |G_j| < |Y_j|$. \\
Case (c) $|G_j| \geq  |Y_j|.$

Case (a). If $G_j$ is empty, then choose $\alpha_i^{j+1}$ as the element from $Z$ which is nearest to $\alpha_i$ for all $i$ with $x_i \in Y_j$. It follows that
$$
\left|\sum_{x_i \in E} \alpha_i - \sum_{x_i \in E} \alpha_i^{j+1}\right| \leq (k-1)z ~~{\rm for~ all}~~ E \in \mathcal{F},
$$
and the above conditions are satisfied for $j+1$. \\(It follows that $\alpha_i^{j+1} = \dots = \alpha_i^p = a_i$ for all $i$.)

In Case (b) associate a real variable $\beta_i$ to every $i = 1, \dots , s$ and consider the system of equations
$$ \sum_{x_i \in E \cap Y_j} \beta_i = 0 ~~{\rm for}~~ E \in G_j,$$
$$\beta_i = 0 ~~{\rm for}~~x_i \notin Y_j.$$
A nontrivial solution $\{ \beta_i \}_{i=1}^s$ exists, because in case (b) there are more variables than equations.
Let $t_0$ be the smallest nonnegative value for which $\alpha_i^j +t \beta_i \in Z$ for some $i$ with $x_i \in Y_j.$
Put $\alpha_i^{j+1} = \alpha_i^j + t_0 \beta_i$ for $i=1, \dots, s$.
It is easy to check that $$\sum_{x_i \in E} \alpha_i^{j} = \sum_{x_i \in E} \alpha_i^{j+1} ~~ {\rm for ~all}~~ E \in G_j.$$
\noindent Hence the above conditions are satisfied for $j+1$.

Case (c). Since each $x_i$ has degree at most $k$ in $G_j$, we may conclude that $|G_j| = |Y_j|$, each $x_i$ has degree exactly $k$ in $G_j$ and
$|E \cap Y_j| = k$ for every $E \in G_j$.
Let $\alpha_i^{j+1}$ be the element from $Z$ nearest to $\alpha_i$ for  every $x_i \in Y_j$.
Then $| \alpha_i^{j+1} - \alpha_i| \leq z/2$ for $x_i \in Y_j$. Since $k/2 \leq k-1$, we obtain
$$ |\sum_{x_i \in E} \alpha_i^{j+1} - \sum_{x_i \in E} \alpha_i| \leq (k-1)z$$
for all $E \in \mathcal{F}$. Hence the above conditions are satisfied for $j+1$. \\(It follows that $\alpha_i^{j+1} = \dots = \alpha_i^p = a_i$ for all $i$.)

Write $a_j =\alpha_i^p$ for $i = 1, \dots, s$. It is easy to check that in each case the relations (iii), (iv) and (vii) hold. This completes the proof.
\end{proof}

\begin{proof} [Proof of Theorem \ref{GV}]
Let $Y_1, \dots, Y_t$ denote the subsets of $A$ which determine the line sums along $S$.
By Lemma \ref{bfe} there exists a function $f: A \to Z$  such that
$$\sum_{a \in A \cap Y_j}  | f(a) - h(a) |  \leq (k-1)z$$ for 
$j= 1, \dots, t$.
\end{proof}

\noindent {\bf Remark 7.1.} A small adjustment must be made if the entries are not all in $[z_1,z_m].$ E.g. values of $h$ smaller than $z_1$ are first replaced by $z_1$, values larger than $z_m$ by $z_m$.
\vskip.1cm

\noindent {\bf Remark 7.2.} If we want to have relatively short vectors $\underline{f},\underline{g}$, then we may apply Theorem \ref{bfe} to the function $f_0$ from Theorem \ref{thmlstsq}.

\end{document}